\let\@fnsymbol\@arabic
\newtheorem{theorem}{Theorem}[section]
\newtheorem{corollary}{Corollary}[section]
\newtheorem{remark}{Remark}[section]
\newenvironment{proof}[1][Proof]{\textbf{#1.} }{\hfill$\Box$}
\def\real{\mathbb{R}}
\begin{document}

\title{Modeling Hessian-vector products in nonlinear optimization: \\
New Hessian-free methods}

\author{L. Song\thanks{CMUC, Department of Mathematics, University of Coimbra,
3001-501 Coimbra, Portugal ({\tt lili.song@mat.uc.pt}).  Support for
this author was partially provided by FCT/Portugal under grants PD/BI/128105/2016
and UID/MAT/00324/2019.}
\and
L. N. Vicente\thanks{Department of Industrial and Systems Engineering,
Lehigh University,
200 West Packer Avenue, Bethlehem, PA 18015-1582, USA and
Centre for Mathematics of the University of Coimbra (CMUC)
({\tt lnv@lehigh.edu}). Support for
this author was partially provided by FCT/Portugal under grants
UID/MAT/00324/2019 and P2020 SAICTPAC/0011/2015.}
}

\maketitle
\footnotesep=0.4cm
{\small
\begin{abstract}
In this paper, we suggest two ways of calculating interpolation models for unconstrained smooth nonlinear optimization when Hessian-vector
products are available. The main idea is to interpolate the objective function using a quadratic on a set of points around the current one and concurrently using the curvature information from products of the Hessian times appropriate vectors, possibly defined by the interpolating points. These enriched interpolating conditions form then an affine space of model Hessians or model Newton directions, from which a particular one can be computed once an equilibrium or least secant principle is defined.

A first approach consists of recovering the Hessian matrix satisfying the enriched interpolating conditions, from which then a Newton direction model can be computed. In a second approach we pose the recovery problem directly in the Newton direction. These techniques can lead to a significant reduction in the overall number of Hessian-vector products when compared to the inexact or truncated Newton method, although simple implementations may pay a cost in linear algebra or number of function evaluations.
\end{abstract}

\vspace{0.5ex}

\begin{center}
\textbf{Keywords:} Nonlinear/Nonconvex Optimization, Hessian-Vector Products, Quadratic Interpolation, Newton Direction, Hessian Recovery.
\end{center}
}

\section{Introduction}

Let us consider the minimization of a twice continuously differentiable
function $f$,
\[
\min_{x \in \real^n} \; f(x),
\]
in a context where the following information is available: Given $x \in \real^n$, one can compute $f(x)$,
$\nabla f(x)$, and $\nabla^2 f(x) v$ for any vector $v \in \real^n$.

\subsection{Literature review}

Newton-based methods for unconstrained nonlinear optimization require
the solution of a linear system at each iteration. The matrix of this system
is the Hessian of~$f$ and its right-hand side is the negative of the gradient.
There are instances where the Hessian is not available for factorization or
where that is too expensive but where one can afford to do Hessian-vector products,
in which cases the system should not be solved directly but by an iterative method. Then it is known that there is a residual error in the application of
the iterative solver and that such a residual can be made smaller by asking more from the solver.
This reasoning gave rise to the so-called inexact or truncated Newton methods
which have formed an important numerical tool for many decades.
It is well known since the contribution~\cite{RSDembo_SCEisenstat_TSteihaug_1982}
what conditions one should impose on the norm of the residual of the linear system
to obtain linear,
superlinear, or quadratic local convergence in the iterates of the underlying method
(see~\cite{JNocedal_SJWright_2006}).
Global convergence of inexact Newton methods
is also well studied~\cite{SCEisenstat_HFWalker_1994a,LGrippo_FLampariello_SLucidi_1986}.
One knows well also how to deal with negative curvature while solving the linear system
using Krylov-type methods (Conjugate Gradients or Lanczos), either using a trust-region technique~\cite{NIMGould_SLucidi_MRoma_PhLToint_1999,TSteihaug_1983a}
or a line search~\cite{SGNash_1984}.

When Hessian or Hessian-vector products are not available, estimating the Hessian can then play an important role, however the existing approaches are not entirely satisfactory. If the Hessian matrix is sparse and its sparsity pattern is known, the approach in~\cite{RFletcher_AGrothey_SLeyffer_1997} enforces multiple secant equations in a least squares sense, solving then a positive semi-definite system of equations in the nonzero Hessian components.
Their approach does not show a significant improvement compared to the L-LBGS or Newton trust-region methods. In~\cite{MJDPowell_PLToint_1979} the Hessian is estimated by finite differences in the gradient, but by dividing the Hessian columns first into groups. Using symmetry and the known sparsity of the Hessian, it is possible to find approximations to different Hessian columns at once. This method is cheap in computer arithmetic and provided better results when compared to~\cite{ARCurtis_MJDPowell_JKReid_1974}.
A more recent approach~\cite{CCartis_etal_2018} imposes the secant equations componentwise, leading to fewer equations when taking into account the available sparsity pattern.
The numerical results show that the algorithm can find the Hessian approximation fast and accurately when the number of nonzero entries per row is relatively low.

\subsection{The contribution of the paper}

In this paper two techniques are proposed and analyzed for the Hessian-free scenario where only Hessian-vector products are available for use. Our goal is to use as few of these products as possible without losing the ability to converge to a solution or a stationary point of the original problem.
Having this in mind we form a quadratic model around a point~$x$, using function and gradient values at~$x$ and function values at the interpolating points~$y^\ell$, $\ell=1,\ldots,p$. The matrix~$H$ of this model or some kind of Newton step has then to be recovered.

Our first approach enriches these interpolating conditions with the information coming from a single true Hessian-vector product $\nabla^2 f(x) (y-x)$, for a point~$y$ different from any of the~$y^\ell$'s of those conditions. In fact, to avoid degeneracy in the enriched interpolating conditions (which are affine conditions on~$H$), one has to choose~$y$ differently from those~$y^\ell$'s and one cannot consider more than one of these products.
The computation of the model Hessian is carried out by minimizing its norm or its distance to a previous model Hessian (say
from a previous iteration of the optimization method)
subject to the enriched interpolating conditions.
Such a Hessian recovery can then lead to the computation of an approximate Newton step.

Our second approach allows us to consider more than one Hessian-vector product in the model formulation.
The interpolating conditions are now enriched by the second-order
information coming from the Hessian-vector products $\nabla^2 f(x) (y^\ell-x)$, $\ell=1,\ldots,p$. Then, avoiding degeneracy and the inverse of the Hessian model, the recovery is done in the space of the Newton direction models, using a modified set of enriched interpolating conditions.
Again, the computation of the Newton direction model is carried out by minimizing its norm or its distance to a previous Newton direction model
subject to the modified enriched interpolating conditions.

In both cases we will provide some theoretical support for the recoveries by proving that the absolute error (in model Hessian or in model Newton direction) is decreasing in the case where~$f$ is quadratic and the enriched interpolating conditions are underdetermined. The recovery absolute error coming from the enriched interpolating conditions (in a determined situation) will be also analyzed for both cases. We report numerical results to confirm that both approaches are sound and can lead to a significant reduction in the number of Hessian-vector products.
The dimension of the problems tested is rather small. The linear algebra is dense, and the number of functions evaluations used can be relatively high. It is left for future research the application to medium/large-scale problems. The second approach based on a Newton direction model can be easily parallelized (see Section~\ref{sec:finalremarks}).

The paper is organized as follows. In Section~\ref{sec:Hrecovery} we present our first approach, the one for the recovery of a model Hessian.
In Section~\ref{sec:Hinverserecovery} we describe our second approach,
the one for the recovery of a model Newton direction.
In both cases we report illustrative numerical results for small problems.
The paper is finished in Section~\ref{sec:finalremarks}
with some final remarks and prospects of future work.
The notation $\mathcal{O}(A)$ will be used to represent the product of a constant times $A$ whenever
the multiplicative is independent of~$A$.
All vector and matrix norms are Euclidian unless otherwise specified.

\section{Hessian recovery from Hessian-vector products} \label{sec:Hrecovery}

Let $x$ be a given point. Suppose also that we have calculated $f$ and $\nabla f$ at $x$ as well as $f$ at a number of points $y^1,\ldots,y^p$. We can then use quadratic interpolation to fit the data by determining a symmetric matrix~$H$ such that
\begin{equation} \label{interpolation}
f(x) + \nabla f(x)^\top (y^\ell - x) + \frac{1}{2} (y^\ell - x)^\top H (y^\ell-x) \; = \; f(y^\ell),
\quad \ell=1,\ldots,p.
\end{equation}
Furthermore, given a set of vectors $v^1,\ldots,v^m$, with~$m$ possibly much smaller than~$n$, suppose that we have calculated
$w^j=\nabla^2 f(x) v^j$, $j=1,\ldots,q$. Hence we could then ask our Hessian model~$H$ to satisfy $H v^j = w^j$, $j=1,\ldots,q$.
However it is important to notice two immediate facts, reported in Remark~\ref{rem:LI}.

\begin{remark} \label{rem:LI}
First we cannot have $q > 1$. Any use of a pair $v^1,v^2$ would make the conditions $H v^1=w^1$ and $H v^2=w^2$ degenerate in~$H$, in the sense that the matrix multiplying the component variables of~$H$ would be rank deficient. This fact can be easily confirmed from multiplying each by the other vector, i.e., by looking at $(v^2)^\top H v^1=(v^2)^\top w^1$ and $(v^1)^\top H v^2=(v^1)^\top w^2$.
Secondly, even when taking $q=1$, one cannot consider $v^1 = y^\ell-x$, for any $\ell$, for the exact same reason. In fact, multiplying $H(y^\ell-x)=w^1$ on the left by $(1/2)(y^\ell-x)^\top$ would lead us to the same term in $H$ as of the corresponding interpolating one in~(\ref{interpolation}).
\end{remark}

\subsection{Hessian recovery}
From Remark~\ref{rem:LI}, we know that we can only consider one vector~$v$ for the Hessian multiplication $w = \nabla^2 f(x)\, v$, and that this vector cannot be any of the interpolation vectors $y^\ell-x$.
Then, in the same vein as it was done in~\cite{ARConn_KScheinberg_LNVicente_2009}
for derivative-free optimization, a model Hessian~$H$ could then be calculated from the solution of the recovery problem
\begin{equation} \label{Hessian-model-original}
\min_{H} \quad \mbox{norm}(H)
\quad \mbox{s.t.} \quad (\ref{interpolation}) \;\; \mbox{and} \;\; H v = w.
\end{equation}
The $\mbox{norm}(H)$ could be taken in a certain $\ell_1$ sense, leading to a linear program (see~\cite{ASBandeira_KScheinberg_LNVicente_2012}). It could also be set as the Frobenius norm, $\mbox{norm}(H) = \| H \|_F$, leading to a quadratic program. Alternatively, one can recover a model Hessian in a least secant fashion (as done in~\cite{MJDPowell_2004} for derivative-free optimization using the Frobenius norm)
\begin{equation} \label{Hessian-model-prev}
\min_{H} \quad \mbox{norm}(H-H^{prev})
\quad \mbox{s.t.} \quad (\ref{interpolation}) \;\; \mbox{and} \;\; H v = w,
\end{equation}
where $H^{prev}$ is a previously computed model Hessian (say, from a previous iteration of an optimization scheme).

\subsection{Theoretical motivation}
We will now see that when $f$ is quadratic the error
in the difference between the optimal solution $H^*$ of (\ref{Hessian-model-prev}) and the true Hessian decreases relatively to the previous estimate $H^{prev}$.
To prove such a result it is convenient to use the Frobenius norm in~(\ref{Hessian-model-prev})
and consider:
\begin{equation} \label{recovery-problem-H}
\min_{H} \quad \frac{1}{2} \| H-H^{prev} \|^2_F
\quad \mbox{s.t.} \quad (\ref{interpolation}) \;\; \mbox{and} \;\; H v = w,
\end{equation}
Let us first write the quadratic $f$ centered at~$x$
\begin{equation} \label{f-quadratic}
f(y) \; = \; a + b^\top (y - x) + \frac{1}{2} (y - x)^\top C (y - x),
\end{equation}
where $a=f(x)$, $b=\nabla f(x)$, and $C$ is a symmetric matrix.

\begin{theorem} \label{th:f-quadratic-H}
Let $f$ be given by (\ref{f-quadratic}) and assume that
the system of linear equations defined by~(\ref{interpolation}) and
$H v = w$ is feasible and underdetermined in $H$.
Let $H^*$ be the optimal solution of problem (\ref{recovery-problem-H}).
Then
\begin{equation*}
 \| H^*-C \|^2_F \; \leq \; \| H^{prev}-C \|^2_F.
\end{equation*}
\end{theorem}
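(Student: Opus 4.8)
The plan is to exploit the fact that, because $f$ is quadratic with Hessian $C$, the true matrix $C$ itself satisfies all the constraints of~(\ref{recovery-problem-H}): the interpolation equations~(\ref{interpolation}) hold with $H=C$ by~(\ref{f-quadratic}), and $Cv = \nabla^2 f(x)v = w$ holds as well. Hence $C$ is feasible for~(\ref{recovery-problem-H}), and the feasible set $\mathcal{F}$ is a nonempty affine subspace of the space of symmetric matrices (nonempty by the feasibility assumption, affine because all constraints are linear in the entries of $H$). This is the key structural observation; once it is in place the result is essentially a statement about orthogonal projection onto an affine subspace.

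First I would recast~(\ref{recovery-problem-H}) as the problem of finding the Euclidean (Frobenius) projection of $H^{prev}$ onto $\mathcal{F}$: minimizing $\frac12\|H-H^{prev}\|_F^2$ over $H\in\mathcal{F}$ is exactly that projection, and $H^*$ is its unique minimizer since $\mathcal{F}$ is a nonempty closed convex (affine) set and the objective is strictly convex. Next I would invoke the standard obtuse-angle / variational characterization of the projection: for every $H\in\mathcal{F}$,
\[
\langle H^{prev} - H^*,\, H - H^* \rangle_F \;\le\; 0,
\]
where $\langle A,B\rangle_F = \mathrm{trace}(A^\top B)$. In fact, since $\mathcal{F}$ is affine, $H-H^*$ ranges over the whole linear subspace $\mathcal{F}-H^*$ as $H$ ranges over $\mathcal{F}$, so this inner product is not merely nonpositive but exactly zero: $H^{prev}-H^*$ is Frobenius-orthogonal to $\mathcal{F}-H^*$. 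Applying this with the particular feasible point $H = C \in \mathcal{F}$ gives $\langle H^{prev}-H^*,\, C-H^*\rangle_F = 0$.

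Finally I would expand, using the Pythagorean identity that this orthogonality yields:
\[
\| H^{prev} - C \|_F^2 \;=\; \| H^{prev} - H^* \|_F^2 + \| H^* - C \|_F^2 \;\ge\; \| H^* - C \|_F^2,
\]
which is the claimed inequality. I do not expect a genuine obstacle here; the only points requiring a little care are (i) checking carefully that $C$ satisfies $Cv=w$ and the interpolation conditions — immediate from the quadratic form~(\ref{f-quadratic}) and $w=\nabla^2 f(x)v=C v$ — and (ii) being clear that the minimization is over \emph{symmetric} $H$, so the ambient inner-product space is the space of symmetric matrices with the Frobenius inner product, and $C$ being symmetric is exactly what keeps it inside that space. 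The ``underdetermined'' hypothesis in the theorem is what makes the result interesting (so that $H^*$ need not equal $C$), but it is not actually needed for the inequality itself, which holds as soon as the system is feasible.
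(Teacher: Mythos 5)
Your proposal is correct and follows essentially the same route as the paper: the paper likewise observes that $C-H^*$ is a feasible direction for the affine constraint set (equivalently, that $C$ is feasible), derives the Frobenius orthogonality $\langle H^{prev}-H^*,\,C-H^*\rangle_F=0$ from first-order optimality along that direction, and concludes via the same Pythagorean expansion. Your phrasing in terms of projection onto an affine subspace is just a cleaner packaging of the identical argument, and your side remarks (that $C\in\mathcal{F}$ is the key point, and that underdeterminedness is not needed for the inequality) are accurate.
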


\begin{proof}
The proof follows the argument in~\cite{MJDPowell_2004}.
From~(\ref{interpolation}),
we have $(y^\ell - x)^\top (C-H^*) (y^\ell - x) = 0$, $\ell=1,\ldots,p$.
We also have $(C-H^*) v = 0$.
Hence,
$C-H^*$ is a feasible direction for the affine space in $H$ defined
by~(\ref{interpolation}) and $H v = w$. It then turns out that the function
\[
m(\theta) \; = \; \frac{1}{2} \| (H^*-H^{prev}) + \theta(C-H^*) \|^2_F
\]
has a minimum at $\theta = 0$.
From the trace definition of the Frobenius norm
\[
m'(\theta) \; = \; \left[ (H^*-H^{prev}) + \theta(C-H^*) \right]^\top (C-H^*).
\]
Hence,
\[
(H^*-H^{prev})^\top (C-H^*) \; = \; 0,
\]
which then implies (given the symmetry of the matrices and considering only the
diagonal entries of the above matrix product)
\[
\sum_{i=1}^n \sum_{j=1}^n  (H^*_{ij}-H^{prev}_{ij}) (C_{ij}-H^*_{ij}) \; = \; 0.
\]

The rest of the proof requires the following calculations:
\begin{equation*}
\begin{aligned}
& \| H^{prev}- C \|_{F}^{2} -  \|H^* - H^{prev} \|_{F}^{2} - \| H^* - C  \|_{F}^{2}  \\
                   & =  \sum_{i=1}^{n} \sum_{j=1}^{n} [ (H^{prev}_{ij} - C_{ij})^2 - (H^*_{ij} -H^{prev}_{ij} )^2 - (H^*_{ij} -  C_{ij})^2] \\
                   & = \sum_{i=1}^{n} \sum_{j=1}^{n} [(H^{prev}_{ij} - C_{ij}+ H^*_{ij} -H^{prev}_{ij} ) ( H^{prev}_{ij} -C_{ij} -H^*_{ij} +H^{prev}_{ij}) - (H^*_{ij} -C_{ij})^2  ]  \\
                   & =  \sum_{i=1}^{n} \sum_{j=1}^{n}  [ (H^*_{ij}- C_{ij}) (2H^{prev}_{ij}- C_{ij} - H^*_{ij} - H^*_{ij} + C_{ij})]\\
                   & = 2 \sum_{i=1}^{n} \sum_{j=1}^{n}  [ (H^*_{ij} - C_{ij})(H^{prev}_{ij} - H^*_{ij} )] \; = \; 0.
\end{aligned}
\end{equation*}
Hence we have established that
\begin{equation*}
\begin{aligned}
\| H^* - C \|_{F}^{2}
& =   \| H^{prev}-  C \|_{F}^{2}  -   \|H^* -  H^{prev} \|_{F}^{2}\\
& \leq  \| H^{prev}- C \|_{F}^{2}.
\end{aligned}
\end{equation*}
\end{proof}
\vspace{1ex}

Let $\alpha$ represent the coefficients of $H$ in $(1/2) w^\top Hw$ in terms of the monomial basis. The quadratic components of this basis are of the form
$(1/2)w_i^2$, $i=1,\ldots,n$ and
 $w_i w_j$, $1\leq i < j \leq n$.
 So, we have $(1/2)h_{11} w_1^2 = \alpha_1 [(1/2) w_1^2],\ldots,(1/2)h_{nn} w_n^2 = \alpha_n [(1/2) w_n^2]$, $h_{12} w_1 w_2 = \alpha_{n+1} [w_1 w_2]$ and so on.
 The recovery problem~(\ref{recovery-problem-H}) can then be formulated approximately\footnote{The norm used in (\ref{recovery-problem-alphaH}) for $\alpha$ is a minor variation of the Frobenius norm of $H$.} as
\begin{equation}\label{recovery-problem-alphaH}
\min_{\alpha} \quad \frac{1}{2}\| \alpha - \alpha^{prev} \|^2
\quad \mbox{s.t.} \quad M \alpha \; = \; \delta,
\end{equation}
where
$$M = \left[ \begin{array}{ll} M^1 \\ M^2      \end{array}\right],  \quad  \delta =  \left[ \begin{array}{ll} \delta^1\\ \delta^2 \end{array}\right],
$$
$$
M^1 \alpha  = \left[ \begin{array}{ll}  \frac{1}{2} (y^1-x)^\top H (y^1-x)\\
\quad \quad \quad \quad \vdots   \\
\frac{1}{2} (y^p-x)^\top H (y^p-x)\end{array}\right],  \quad
M^2 \alpha =  H v,
$$
$$
\delta^1=  \left[ \begin{array}{c} f(y^1) - f(x) - \nabla f(x)^\top (y^1-x)\\
\vdots \\
f(y^p) - f(x)  - \nabla f(x)^\top (y^p-x)    \end{array}\right], \quad
\delta^2=  w.
$$

Another piece of motivation for this approach comes from the fact that
the enriched interpolating conditions defined by~(\ref{interpolation}) and
$H v = w$, once determined (i.e. with as many equations as variables), may
produce a model Hessian~$H$ that used together with~$\nabla f(x)$
can give rise to a fully quadratic model. Such a model has the same orders of accuracy as a Taylor-based model~\cite{ARConn_KScheinberg_LNVicente_2008} (see also~\cite{ARConn_KScheinberg_LNVicente_2009}).

\begin{theorem} \label{th:Hessian-error}
If $p$ is chosen such that $p+n= \frac{n^2+n}{2}$ and if $M$ is nonsingular, then the model Hessian~$H$ resulting from $M \alpha = \delta$ in~(\ref{recovery-problem-alphaH}) can give rise to a fully quadratic model, in other words, one has
\begin{equation*}
\| H -\nabla^2 f(x) \| \; =  \; \mathcal{O}(\Delta_y),
\end{equation*}
where $\Delta_{y}=\max _{1 \le \ell \le p}\left\|y^{\ell}-x\right\|$ and the constant multiplying $\Delta_{y}$ depends on the inverse of an appropriate scaled version of~$M$.
\end{theorem}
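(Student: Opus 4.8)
The plan is to bound the symmetric error matrix $E = H - \nabla^2 f(x)$ directly through the square linear system $M\alpha=\delta$ (which, since $M$ is nonsingular, has the unique solution $\alpha = M^{-1}\delta$, so the objective in~(\ref{recovery-problem-alphaH}) plays no role). Let $\alpha^{f}$ denote the coefficient vector, in the same monomial basis, of $\nabla^2 f(x)$, so that $\bar\alpha := \alpha - \alpha^f$ is the coefficient vector of $E$. Evaluating the two blocks of $M\alpha$ at $\alpha^f$: the $\ell$-th row of $M^1\alpha^f$ equals $\tfrac12(y^\ell-x)^\top\nabla^2 f(x)(y^\ell-x)$, so by a second-order Taylor expansion of $f$ around $x$ (using that $\nabla^2 f$ is Lipschitz near $x$, as in the fully quadratic framework of~\cite{ARConn_KScheinberg_LNVicente_2008}) it differs from $\delta^1_\ell = f(y^\ell)-f(x)-\nabla f(x)^\top(y^\ell-x)$ by $\mathcal{O}(\|y^\ell-x\|^3)=\mathcal{O}(\Delta_y^3)$; and $M^2\alpha^f = \nabla^2 f(x)\,v = w = \delta^2$ exactly. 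Subtracting, one gets $M\bar\alpha = r$ with $r=[r^1;\,r^2]$, $\|r^1\|=\mathcal{O}(\Delta_y^3)$, and $r^2=0$.

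One cannot simply write $\|\bar\alpha\|\le\|M^{-1}\|\,\|r\|$, because the rows of $M^1$ are built from the products $(y^\ell-x)_i(y^\ell-x)_j$ and are of size $\mathcal{O}(\Delta_y^2)$, so $\|M^{-1}\|$ grows like $\Delta_y^{-2}$ as $\Delta_y\to0$; this is precisely the ``scaled version of $M$'' alluded to in the statement. I would introduce $D = \mathrm{diag}(\Delta_y^2 I_p,\,I_n)$ and the scaled matrix $\hat M = D^{-1}M$, whose first $p$ rows now involve the normalized directions $(y^\ell-x)/\Delta_y$ (of norm at most one) and whose entries are therefore all $\mathcal{O}(1)$; since $M$ is nonsingular and $\Delta_y>0$, $\hat M$ is nonsingular as well. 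Then $\bar\alpha = M^{-1}r = \hat M^{-1}(D^{-1}r)$, and $D^{-1}r$ has first block $r^1/\Delta_y^2 = \mathcal{O}(\Delta_y)$ and second block $0$, so $\|D^{-1}r\| = \mathcal{O}(\Delta_y)$ and hence $\|\bar\alpha\|\le \|\hat M^{-1}\|\cdot\mathcal{O}(\Delta_y)$.

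Finally, since the norm used on $\alpha$ is a minor variation of the Frobenius norm of the corresponding matrix and all norms on $\real^{n\times n}$ are equivalent, $\|H-\nabla^2 f(x)\| = \mathcal{O}(\|\bar\alpha\|) = \mathcal{O}(\Delta_y)$, with the constant proportional to $\|\hat M^{-1}\|$, as claimed. Alternatively, once the dimension count $p+n=(n^2+n)/2$ and the nonsingularity of $M$ are in place, one may simply note that $H$ together with $\nabla f(x)$ is the unique quadratic interpolant of $f$ on a complete set of (well-poised) conditions and invoke the error bounds for fully quadratic models in~\cite{ARConn_KScheinberg_LNVicente_2008,ARConn_KScheinberg_LNVicente_2009}. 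The one step that needs care — and the reason the constant is stated in terms of a scaled $M$ rather than $M$ itself — is exactly this rescaling: the useful $\mathcal{O}(\Delta_y)$ estimate emerges only after cancelling the $\Delta_y^{-2}$ coming from $D^{-1}$ against the $\Delta_y^{3}$ size of the interpolation residual $r^1$.
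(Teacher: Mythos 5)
Your proposal is correct and follows essentially the same route as the paper's proof: a Taylor expansion giving a residual of size $\mathcal{O}(\Delta_y^3)$ in the interpolation block and an exact match in the Hessian-vector block, followed by dividing the first $p$ rows by $\Delta_y^2$ so that the constant is expressed through the inverse of the scaled matrix $[M^1/\Delta_y^2;\,M^2]$. Your write-up merely makes explicit the diagonal scaling matrix $D$ and the final passage from the coefficient vector $\bar\alpha$ to the matrix norm of $H-\nabla^2 f(x)$, steps the paper leaves implicit.
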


\begin{proof}
First we follow the argument in~\cite{ARConn_KScheinberg_LNVicente_2008}[Theorem~4.2] and consider
that~$x$ is at the origin, without any lost of generality.
One can start by making a Taylor expansion of~$f$ around~$x$ along all the displacements~$y^\ell-x$, $\ell=1,\ldots,p$, leading to
\begin{equation} \label{error-in-interpolation}
M^1 (\alpha- \alpha^x) \; = \; \mathcal{O}(\Delta_y^3),
\end{equation}
where $\alpha^x$ stores the components of~$\nabla^2 f(x)$ and
each component of the right-hand side is bounded by $(1/6) L_{\nabla^2 f} \|y^\ell - x \|^3$, with $L_{\nabla^2 f}$ the Lipschitz constant of~$\nabla^2 f$.
One also has
\[
M^2 (\alpha - \alpha^x) \; = \; 0.
\]
Now we divide each row of~(\ref{error-in-interpolation}) by $\Delta_y^2$. The proof is concluded by considering $[M^1/\Delta_y^2 ; M^2]$ as the scaled version of $M$ alluded in the statement of the result.
\end{proof}

\subsection{Numerical results for the determined case} \label{subsec:num-det}

As we have discussed in Theorem~\ref{th:Hessian-error}, if $p$ is chosen such that $p+n= \frac{n^2+n}{2}$ and if the matrix~$M$ is nonsingular and well conditioned, the model Hessian~$H$ resulting from $M \alpha = \delta$ in~(\ref{recovery-problem-alphaH}) becomes fully quadratic. The error between the Hessian model~$H$ and $\nabla^2 f(x)$ is then of the $\mathcal{O}(\Delta_y)$, where $\Delta_{y}=\max _{1 \le \ell \le p}\|y^{\ell}-x\|$.

In this section we will report some illustrative numerical results to confirm that an approach
built on such an Hessian model can lead to an economy of Hessian-vector products.
Our term of comparison will be the inexact Newton method (as described in~\cite[Section 7.1]{JNocedal_SJWright_2006}), where the system
$\nabla^2 f(x) d^{IN}= -\nabla f(x)$ is solved by applying a truncated linear conjugate (CG) method
(stopping once a direction of negative curvature is found or a relative error criterion is met).
In our case, after computing~$H$ from solving $M \alpha = \delta$ in~(\ref{recovery-problem-alphaH}), to compute our search direction~$d^{MH}$,
we apply the exact same truncated CG method to $H d^{MH}= -\nabla f(x)$ as in the inexact Newton method.
The computed directions $d^{IN}$ or $d^{MH}$ are necessarily descent in the sense of making an
acute angle with~$-\nabla f(x)$.

For both the inexact Newton method and our model Hessian approach,
a new iterate is of the form $x+ \alpha d$, where $d$ is given by $d^{IN}$ or $d^{MH}$ respectively.
The same
cubic interpolation line search~\cite[Section 2.4.2]{WSun_YYuan_2006} is used to compute the stepsizes~$\alpha^{IN}$ and $\alpha^{MH}$. In this line search, the objective function is approximated by a cubic polynomial with function values at three points and a derivative value at one point. The line search starts with a unit stepsize and terminates either successfully with a value~$\alpha$ satisfying a sufficient decrease condition for the function (of the form $f(x+\alpha d) \leq f(x)+c_{1} \alpha  \nabla f(x)^\top d$, with $c_1 =10^{-4}$) or unsuccessfully with a stepsize smaller than $10^{-10}$.

To form the model described in~(\ref{Hessian-model-original}) one needs $p$ interpolation points $y^1,\cdots,y^p$ and one vector~$v$ for Hessian multiplication. We have used the following scheme:
Before the initial iteration we have randomly generated
a set of $p$~points, $\{y^1,\cdots,y^p\}$, and a vector~$v$, in the unit ball $B(0; 1)$ centered at the origin.
Then, at each iteration $x_k$, the interpolation points used were of the form $x_k+r_k y^\ell_k$,
$\ell=1,\ldots,p$, and the vector $v_k$ of the form $r_k\,v$, where $r_k = \min\{10^{-2}, \max\{10^{-4},\|x_{k}-x_{k-1}\| \} \}$, $k = 1,2,\ldots$.

For the purpose of this numerical illustration, we selected 48 unconstrained (smooth and nonlinear) very small problems from the CUTEst collection (see Appendix~\ref{APP:test_problems}), also used in the papers~\cite{CCartis_etal_2018,SGratton_CWRoyer_LNVicente_2018}.
Both methods were stopped when an iterate $x_k$ was found such that $\| \nabla f(x_k)\| < 10^{-5}$.
We built performance profiles (see Appendix~\ref{APP:PP}) using as performance metric the numbers of Hessian-vector products and iterations (Figure~\ref{PP:PP_Hv_DF_new}) and the number of function evaluations (Figure~\ref{PP:PP_func_DF}). One can see that our approach can effectively lead to a significant reduction on the number of Hessian-vector products. We estimate that this reduction is approximately 50\% as both approaches take on average~2~CG inner iterations to compute a direction, and the number of main iterations is comparable. Of course, one has to pay a significant cost in number of function evaluations which is of the order of~$n^2$ per main iteration.

 %%%%%%%%%%%%%%%
 \begin{figure}[H]
 	\begin{minipage}[H]{0.5\linewidth}
 		\centering
 		\includegraphics[height=5.5cm,width=8.5cm]{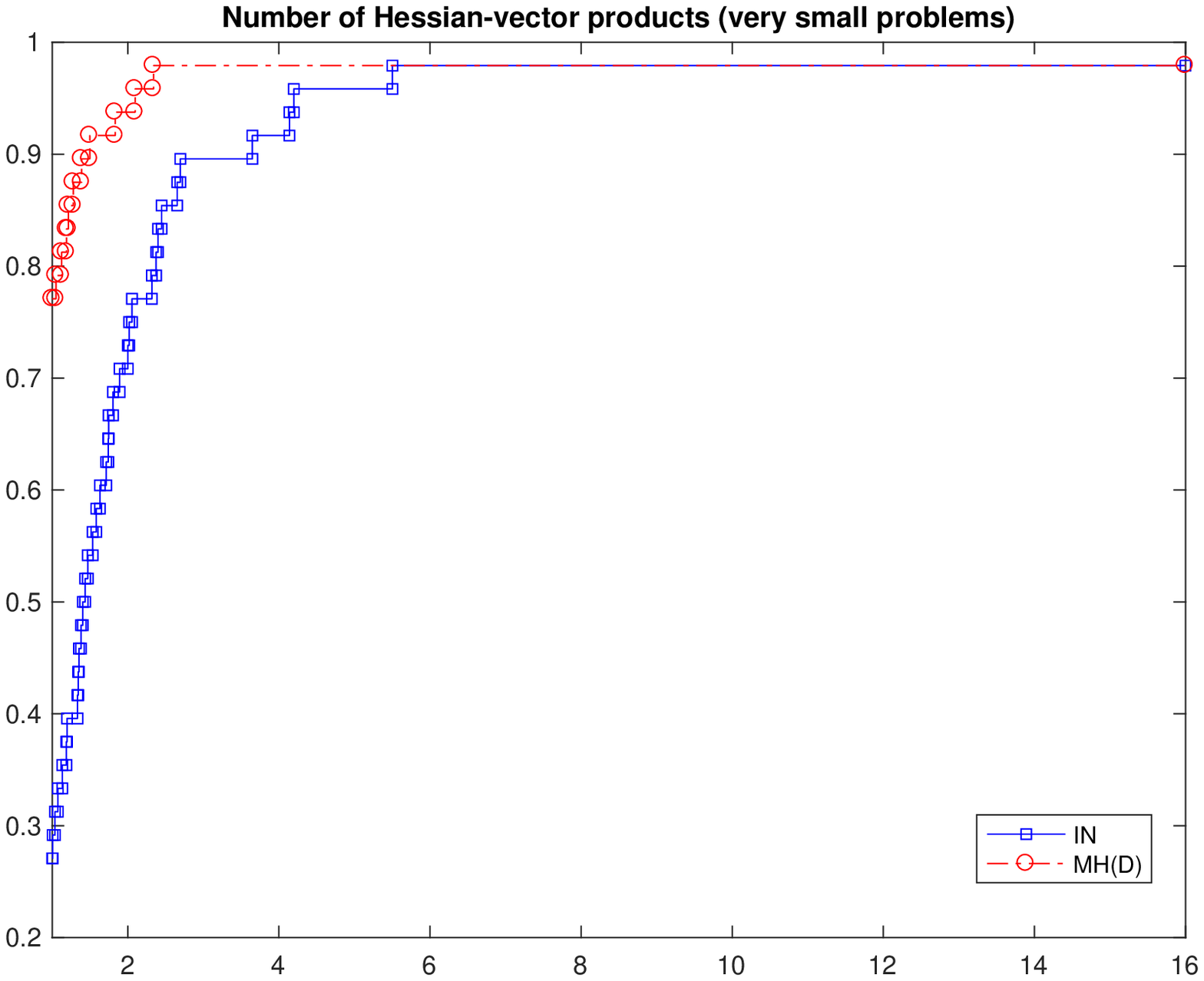}
 	\end{minipage}%
 	\begin{minipage}[H]{0.55\linewidth}
 		\centering
 		\includegraphics[height=5.5cm,width=8.5cm]{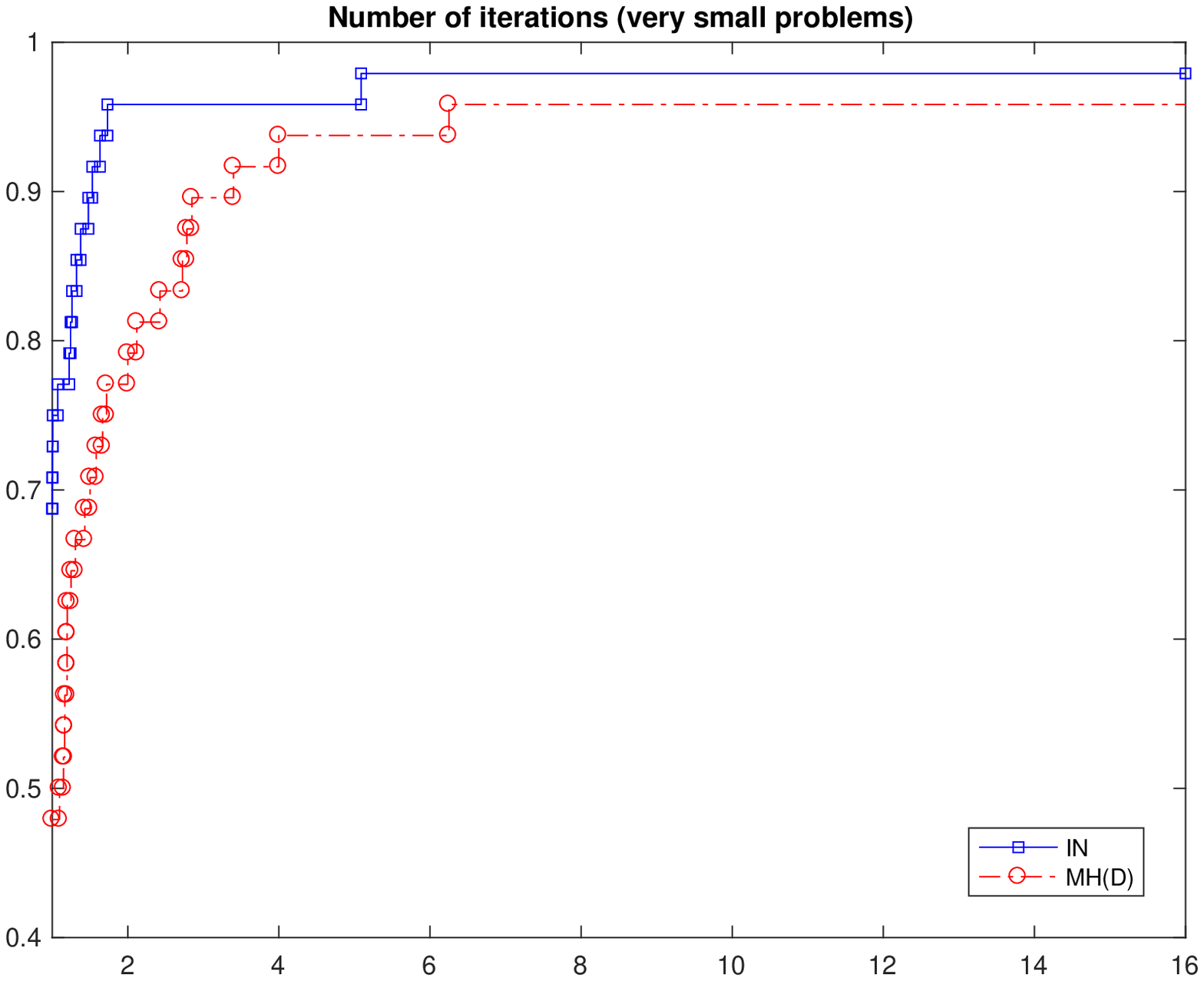}	
 	\end{minipage}
 \caption{Testing the Hessian recovery within a line-search algorithm. Performance profiles for the numbers of Hessian-vector products and iterations, for the set of very small problems of Appendix~\ref{APP:test_problems}. The value of $p$ was set to $\frac{n^2+n}{2}-n$.
 \label{PP:PP_Hv_DF_new}	}
 \end{figure}
 %%%%%%%%%%%%%%%
 		
 %%%%%%%%%%%%%%%
 \begin{figure}[H]
 	\centering
 	\includegraphics[height=5.5cm,width=8.5cm]{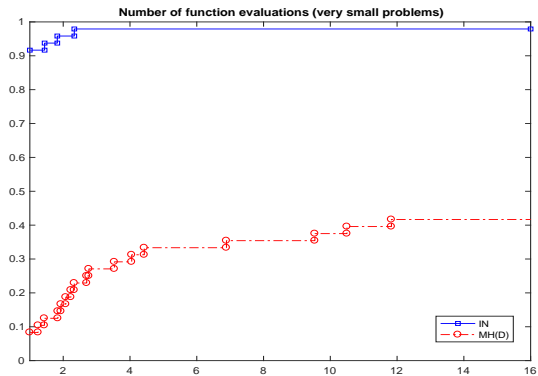}
 	\caption{Testing the Hessian recovery within a line-search algorithm.
 Performance profiles for number of function evaluations, for the set of very small problems of Appendix~\ref{APP:test_problems}. The value of $p$ was set to $\frac{n^2+n}{2}-n$.}
 	\label{PP:PP_func_DF}	
 \end{figure}
 %%%%%%%%%%%%%%%	

\subsection{Numerical results for the determined case when the Hessian sparsity is known}

In many optimization problems, the Hessian matrix of the objective function is sparse and the corresponding sparsity pattern is known.
Let $\Omega (\nabla^2f) = \{(i, j): i\leq j, \nabla^2f_{ij}(x) = 0 \text{ for all } x\}$ be the sparsity pattern of $\nabla^2 f$.
When $|\Omega (\nabla^2f)| \ll n(n+1)/2$, it is then beneficial and often necessary to use specialized algorithms and data structures that take advantage of the known sparsity pattern.
One can tailor our model Hessian approach to problems with sparse Hessian matrices when the  sparsity patterns are known.  We require the Hessian model to share the same sparsity pattern of the true Hessian, recovering only the nonzero
elements. In fact,
instead of solving problem~(\ref{recovery-problem-alphaH}) with respect to the whole Hessian matrix, we solve problem
\begin{equation}\label{recovery-problem-alphaH-sparse}
\min_{\alpha_{\Omega}} \quad \frac{1}{2}\| \alpha_{\Omega} - \alpha^{prev}_{\Omega}\|^2
\quad \mbox{s.t.} \quad M_{\Omega} \alpha_{\Omega} \; = \; \delta,
\end{equation}
where the elements in the rows of $M_{\Omega}$ and in the vector $\alpha_{\Omega}$ correspond now only to nonzero entries.

We have tested our sparse Hessian recovery approach using the same algorithmic environment of Subsection~\ref{subsec:num-det}, the only difference being in the usage of the model equation $M_{\Omega} \alpha_{\Omega} = \delta$ in~(\ref{recovery-problem-alphaH-sparse}) and a smaller value of $p$ (now given by the difference between the number of nonzeros of the Hessian and $n$, so that the matrix $M_{\Omega}$ is squared). The sparse problems used are listed in Appendix~\ref{APP:sparse_problems}.
The experiments are reported in Figures~\ref{PP:PP_hv_sparse} and~\ref{PP:PP_func_sparse} in the form of performance profiles. The conclusions are similar to those in Subsection~\ref{subsec:num-det}.

%%%%%%%%%%%%%%%
\begin{figure}[H]
\begin{minipage}[H]{0.5\linewidth}
\centering
\includegraphics[height=5.5cm,width=8.5cm]{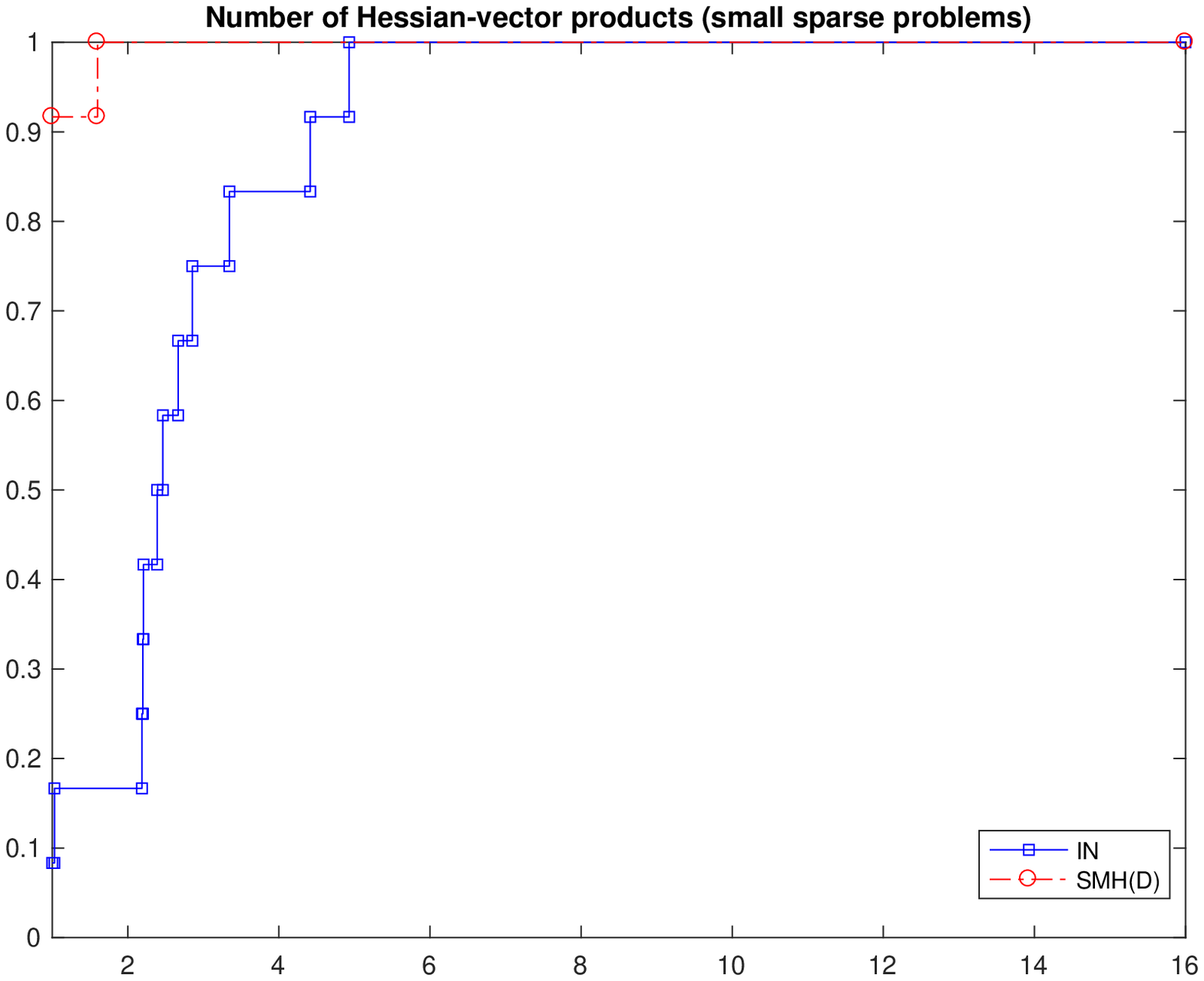}
\end{minipage}%
\begin{minipage}[H]{0.55\linewidth}
\centering
\includegraphics[height=5.5cm,width=8.5cm]{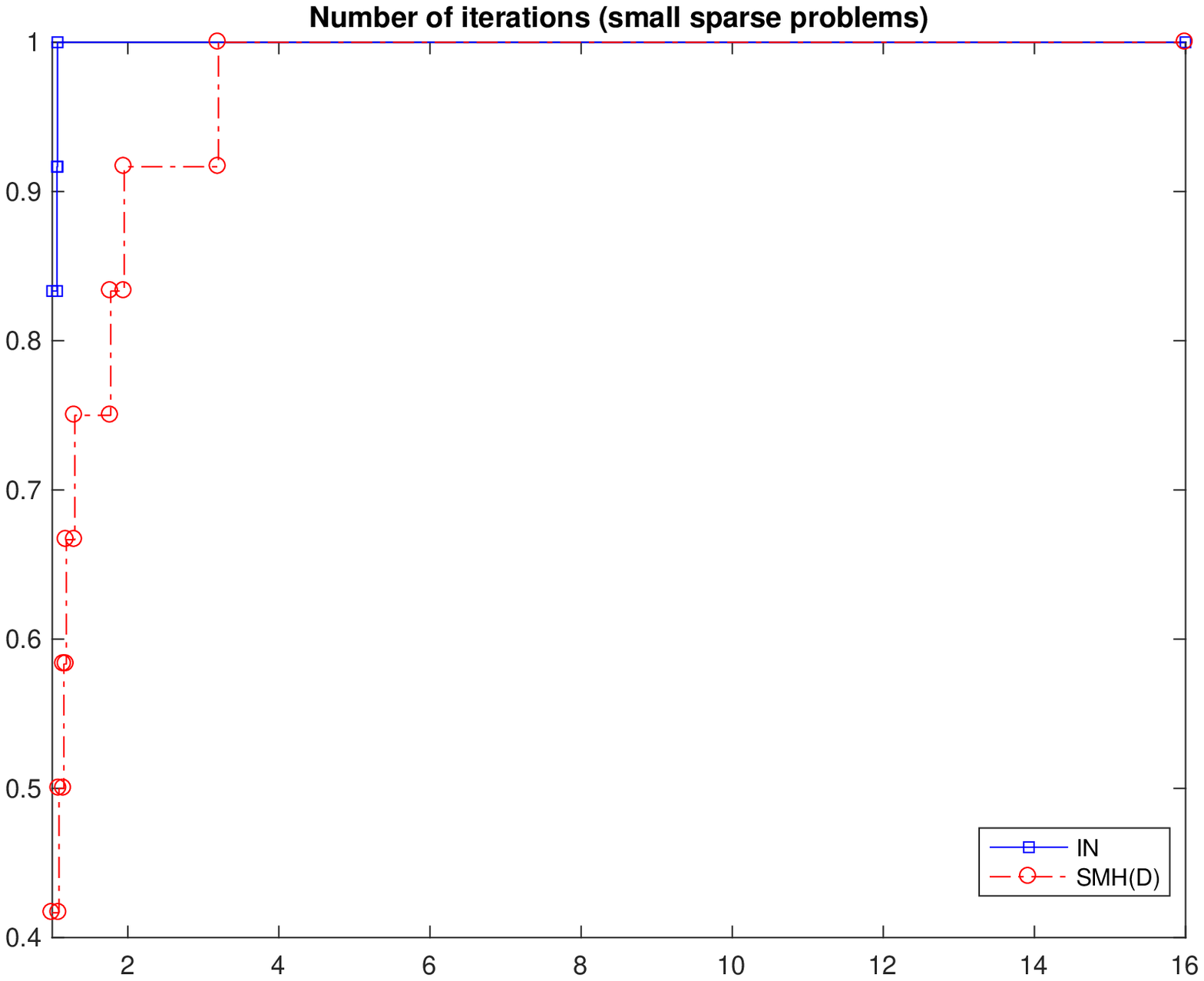}
\end{minipage}
\caption{Testing the Hessian recovery within a line-search algorithm. Performance profiles for the numbers of Hessian-vector products and iterations, for the set of small sparse problems of Appendix~\ref{APP:sparse_problems}. The value of $p$ was set to number of nonzeros minus $n$.}
\label{PP:PP_hv_sparse}
\end{figure}
%%%%%%%%%%%%%%%
\begin{figure}[H]
\centering
\includegraphics[height=5.5cm,width=8.5cm]{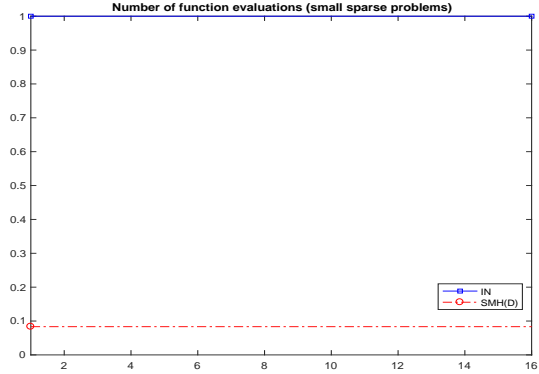}
\caption{Testing the Hessian recovery within a line-search algorithm. Performance profiles for number of function evaluations, for the set of small sparse problems of Appendix~\ref{APP:sparse_problems}. The value of $p$ was set to number of nonzeros minus $n$.}
\label{PP:PP_func_sparse}
\end{figure}

%%%%%%%%%%%%%%%

\subsection{Recovery cost in the general case}

The necessary and sufficient optimality conditions for the convex QP~(\ref{recovery-problem-alphaH}) can be stated as
\begin{equation}\label{necessary-sufficient-conditions}
\begin{aligned}
\alpha - \alpha^{prev} - M^\top \lambda  &=  0  \\
M \alpha  &=  \delta,
\end{aligned}
\end{equation}
where $\lambda$ denotes the Lagrange multipliers. Such multipliers can then be recovered by solving
\begin{equation}\label{linear-system-H}
M M^\top \lambda \; = \;  \delta - M\alpha^{prev} .
\end{equation}
The system~(\ref{linear-system-H}) can either be solved directly or iteratively.
If solved directly the cost is of the order of~$(p+n)^2n^2$ to form $M M^\top$ and  of~$(p+n)^3$ to factorize it, and the overall storage of the order of~$(p+n)^2$. If the Conjugate Gradient (CG) method is applied, the overall cost is
of the order of $c_g(p+n)n^2$, where~$c_g$ is the number of CG iterations. In fact,
each matrix vector multiplication with either $M^\top$ or $M$ costs $\mathcal{O}((p+n)n^2)$.
Solving the KKT system~(\ref{necessary-sufficient-conditions}) using an indefinite factorization is even less viable given that the storage space would be of the order of~$(n^2+p)^2$.

\section{Newton direction recovery from Hessian-vector products} \label{sec:Hinverserecovery}

In this section, we introduce a new approach to recover the Newton direction from Hessian-vector products that does not require an explicit recovery of the Hessian matrix.

\subsection{Newton direction recovery}

Let us first consider a quadratic Taylor expansion of the form
\begin{equation} \label{2nd}
f(x) + \nabla f(x)^\top (y^\ell - x) + \frac{1}{2} (y^\ell - x)^\top \nabla^2 f(x)
(y^\ell-x) \; \simeq \; f(y^\ell),
\quad \ell=1,\ldots,p,
\end{equation}
made using a sample set $\{ y^1,\ldots,y^p \}$.
We will synchronize expansion~(\ref{2nd})
with Hessian-vector products along $y^\ell-x$, $\ell=1,\ldots,p$.
In fact, we require the calculation of
\begin{equation} \label{Hessian-product}
z^\ell \; = \; \nabla^2 f(x) (y^\ell-x), \quad \ell=1,\ldots,p.
\end{equation}

Since our interest relies specifically on the calculation of the Newton direction,
assuming that the model Hessian~$\nabla^2 f(x)$ is nonsingular, we obtain from~(\ref{2nd})
and~(\ref{Hessian-product})
\begin{equation*}
f(x) + (\nabla^2 f(x)^{-1}\nabla f(x))^\top \nabla^2 f(x)(y^\ell-x) + \frac{1}{2} (y^\ell-x)^\top z^\ell \; \simeq \; f(y^\ell),
\quad \ell=1,\ldots,p.
\end{equation*}
Then, introducing the model vector $d \simeq -\nabla^2 f(x)^{-1}\nabla f(x)$, one arrives at a new set of enriched interpolating conditions
\begin{equation} \label{interpolation-inverse}
(z^\ell)^\top  d\; = \; -f(y^\ell) + f(x) + \frac{1}{2} (y^\ell-x)^\top z^\ell, \quad \ell=1,\ldots,p.
\end{equation}

Equations~(\ref{interpolation-inverse}) lead then to a new recovery problem
\begin{equation} \label{recovery-problem}
\min_{d} \quad \mbox{norm}(d-d^{prev})
\quad \mbox{s.t.}   \quad  (\ref{interpolation-inverse}).
\end{equation}
When $d^{prev}$ is the previously recovered Newton direction, we are following the spirit of a quasi-Newton least secant approach. One could also consider the case $d^{prev}=0$ as it was done in some derivative-free approaches for Hessian recovery.
Let us now give two arguments to motivate this approach.

\subsection{Theoretical motivation}

First, as in the previous section, we can provide motivation for this approach when~$f$ is assumed quadratic~(\ref{f-quadratic}), this time with a nonsingular Hessian~$C$. Here we need to consider the square of the $\ell_2$-norm in~(\ref{recovery-problem})
\begin{equation} \label{recovery-problem-F}
\min_{d} \quad \frac{1}{2} \|d-d^{prev}\|^2
\quad \mbox{s.t.}   \quad  (\ref{interpolation-inverse}).
\end{equation}
We will show that in the quadratic case the error in the approximation of the Newton direction
is monotonically non increasing.

\begin{theorem} \label{th:f-quadratic}
Let $f$ be given by (\ref{f-quadratic}) with $C$ nonsingular and assume that
the system of linear equations (\ref{interpolation-inverse}) is feasible and underdetermined in $d$.
Let $d^*$ be the optimal solution of problem~(\ref{recovery-problem-F}).
Then
\begin{equation}
 \| d^*-(-C^{-1}b) \|^2 \; \leq \; \| d^{prev}-(-C^{-1}b) \|^2.
\end{equation}
\end{theorem}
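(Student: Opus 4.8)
The plan is to mimic the projection argument used in the proof of Theorem~\ref{th:f-quadratic-H}, but now working in $\real^n$ with the Euclidean norm rather than in the space of symmetric matrices with the Frobenius norm. The key preliminary observation is that, when $f$ is the quadratic~(\ref{f-quadratic}) with nonsingular $C$, the true Newton direction $-C^{-1}b$ is itself feasible for the enriched interpolating conditions~(\ref{interpolation-inverse}); this is the only new ingredient compared with the previous theorem, and it is what makes the conclusion meaningful (it also confirms, in the quadratic case, the feasibility hypothesis in the statement).

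First I would substitute $z^\ell = \nabla^2 f(x)(y^\ell - x) = C(y^\ell - x)$, which holds exactly in the quadratic case, into both sides of~(\ref{interpolation-inverse}). On the right-hand side, using $f(y^\ell) - f(x) = b^\top(y^\ell-x) + \frac{1}{2}(y^\ell-x)^\top C(y^\ell-x)$, the term $\frac{1}{2}(y^\ell-x)^\top z^\ell = \frac{1}{2}(y^\ell-x)^\top C(y^\ell-x)$ cancels the quadratic part, leaving $-b^\top(y^\ell-x)$. On the left-hand side, $(z^\ell)^\top(-C^{-1}b) = -(y^\ell-x)^\top C\,C^{-1} b = -(y^\ell-x)^\top b$ by symmetry of $C$. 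Hence $d = -C^{-1}b$ satisfies~(\ref{interpolation-inverse}), so the difference $(-C^{-1}b) - d^*$ lies in the linear subspace $\{ d : (z^\ell)^\top d = 0,\ \ell=1,\ldots,p \}$; that is, it is a feasible direction for the affine feasible set of problem~(\ref{recovery-problem-F}).

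Next, since $d^*$ minimizes $\frac{1}{2}\|d - d^{prev}\|^2$ over that affine set, the scalar function $m(\theta) = \frac{1}{2}\big\| (d^* - d^{prev}) + \theta\,((-C^{-1}b) - d^*) \big\|^2$ attains its minimum at $\theta = 0$, so that $m'(0) = (d^* - d^{prev})^\top ((-C^{-1}b) - d^*) = 0$. Finally I would expand $\| d^{prev} - (-C^{-1}b) \|^2 = \big\| (d^{prev} - d^*) + (d^* - (-C^{-1}b)) \big\|^2$ and use this orthogonality to kill the cross term, obtaining $\| d^{prev} - (-C^{-1}b) \|^2 = \| d^{prev} - d^* \|^2 + \| d^* - (-C^{-1}b) \|^2$, which gives the claimed inequality.

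I do not expect any genuine obstacle here: the argument is actually cleaner than in Theorem~\ref{th:f-quadratic-H}, since we are in $\real^n$ with the ordinary inner product and need not pass through the trace form of the Frobenius norm. The one point requiring care is the feasibility check of $-C^{-1}b$, where the cancellation of the quadratic terms relies crucially on $z^\ell$ being the \emph{exact} Hessian-vector product, which is why the result is stated only for quadratic $f$.
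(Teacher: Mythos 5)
Your proposal is correct and follows essentially the same route as the paper's proof: both verify (via $z^\ell = C(y^\ell-x)$ and the quadratic form of $f$) that $-C^{-1}b$ satisfies~(\ref{interpolation-inverse}), so that $d^*-(-C^{-1}b)$ is a feasible direction, then use the one-dimensional minimization of $m(\theta)$ at $\theta=0$ to get orthogonality and conclude by the Pythagorean identity. Your explicit feasibility check of $-C^{-1}b$ is just a slightly more spelled-out version of the paper's derivation of $(z^\ell)^\top(d^*-(-C^{-1}b))=0$.
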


\begin{proof}
From the expression~(\ref{f-quadratic}) for $f$, one has
\[
f(y^\ell) \; = \; a + (C^{-1}b)^\top C(y^\ell - x) + \frac{1}{2} (y^\ell - x)^\top C (y^\ell - x), \quad \ell=1,\ldots,p.
\]
and hence, using $z^\ell = C (y^\ell-x)$, $\ell=1,\ldots,p$, and~(\ref{interpolation-inverse}),
one arrives at $(z^\ell)^\top (d^* - (-C^{-1}b) ) = 0$.
The conclusion is that
$d^* - (-C^{-1}b)$ is a feasible direction for the affine space in $d$ defined
by~(\ref{interpolation-inverse}).

The rest of the proof follows the same lines as in
the proof of Theorem~\ref{th:f-quadratic-H}.
The function
\[
m(\theta) \; = \; \frac{1}{2} \| (d^*-d^{prev}) + \theta(-C^{-1}b-d^*) \|^2
\]
has a minimum at $\theta = 0$, from which we conclude that $(d^*-d^{prev})^\top (-C^{-1}b-d^*) = 0$.
From here we obtain
\begin{equation*}
\begin{aligned}
\| d^* - (-C^{-1}b) \|^{2}
& =   \| d^{prev}-  (-C^{-1}b) \|^{2}  -   \|d^* -  d^{prev} \|^{2}\\
& \leq  \| d^{prev}- (-C^{-1}b) \|^{2}.
\end{aligned}
\end{equation*}
\end{proof}
\vspace{1ex}

The second argument establishes the accuracy of the recovery under the assumption that $p \geq n$
(see the end of this subsection for a discussion about this assumption and how to circumvent it practice).
We will establish a bound on the norm of the absolute error of the recovered Newton direction~$d^N$ based on
$\Delta_y = \max_{1 \leq \ell \leq p} \|y^\ell - x \|$,
$\Delta_z = \max_{1 \leq \ell \leq p} \|z^\ell \|$,
and the conditioning of the matrix $M_L^z$, whose rows are
$(1/\Delta_z)(z^\ell)^\top$, $\ell=1,\ldots,p$.

\begin{theorem} \label{error-bound}
Suppose that $p \geq n$, the matrix $M_L^z$ is full column rank,
and $\nabla^2 f(x)$ is invertible.
Then,
if $d^N$ satisfies (\ref{interpolation-inverse}), in a least squares sense when $p > n$, one has
\[
\left\| -\nabla^2 f(x)^{-1}\nabla f(x)- d^N \right\| \; \leq \;
\Lambda_z \mathcal{O}\left( \frac{ \Delta_y^3 }{ \Delta_z} \right),
\]
where $\Lambda_z$ is a bound on the norm of the left inverse of $M_L^z$ and
the multiplicative constant in $\mathcal{O}$ depends on the Lipschitz constant of $\nabla^2 f$.
\end{theorem}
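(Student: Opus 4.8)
Write $\bar d = -\nabla^2 f(x)^{-1}\nabla f(x)$ for the exact Newton direction (well defined since $\nabla^2 f(x)$ is invertible). The plan is to measure how far $\bar d$ is from satisfying the enriched conditions~(\ref{interpolation-inverse}), and then to transport that residual onto $\bar d - d^N$ through the left inverse of $M_L^z$.

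First I would substitute $d = \bar d$ into the left-hand side of~(\ref{interpolation-inverse}). Using $z^\ell = \nabla^2 f(x)(y^\ell - x)$ together with the symmetry of $\nabla^2 f(x)$, one obtains $(z^\ell)^\top \bar d = -(y^\ell - x)^\top \nabla f(x)$, so the residual of $\bar d$ in the $\ell$-th equation of~(\ref{interpolation-inverse}) is
\[
r^\ell \; = \; f(y^\ell) - f(x) - \nabla f(x)^\top(y^\ell - x) - \frac{1}{2}(y^\ell - x)^\top \nabla^2 f(x)(y^\ell - x),
\]
which is precisely the second-order Taylor remainder of $f$ at $x$ along $y^\ell - x$. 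By Lipschitz continuity of $\nabla^2 f$ one has $|r^\ell| \le \frac{1}{6} L_{\nabla^2 f}\|y^\ell - x\|^3$, so the residual vector $r = (r^1,\ldots,r^p)^\top$ satisfies $\|r\| = \mathcal{O}(\Delta_y^3)$, the constant depending on $L_{\nabla^2 f}$ (the factor $\sqrt{p}$ from $\|r\| \le \sqrt{p}\max_\ell |r^\ell|$ being absorbed).

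Next I would write~(\ref{interpolation-inverse}) compactly as $Zd = c$, where $Z$ has rows $(z^\ell)^\top$ and $c$ collects the right-hand sides; the previous computation reads $Z\bar d - c = r$. Since $M_L^z = (1/\Delta_z)\,Z$ is full column rank, so is $Z$, and its Moore--Penrose pseudoinverse $Z^+ = (Z^\top Z)^{-1} Z^\top$ is a left inverse, $Z^+ Z = I_n$. This covers both regimes at once: for $p = n$, $d^N = Z^{-1}c$; for $p > n$, the (unique) least-squares solution is $d^N = Z^+ c$. In either case
\[
\bar d - d^N \; = \; Z^+ Z \bar d - Z^+ c \; = \; Z^+ r,
\]
hence $\|\bar d - d^N\| \le \|Z^+\|\,\|r\|$. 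Finally, from $Z = \Delta_z M_L^z$ one gets $Z^+ = (1/\Delta_z)\,(M_L^z)^+$, so $\|Z^+\| \le \Lambda_z / \Delta_z$; combining with $\|r\| = \mathcal{O}(\Delta_y^3)$ yields $\|\bar d - d^N\| \le \Lambda_z\,\mathcal{O}(\Delta_y^3/\Delta_z)$, as claimed.

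The step needing most care is the uniform handling of $p = n$ and $p > n$: one must verify that the least-squares iterate obeys $\bar d - d^N = Z^+ r$, which hinges on $Z^+ Z = I_n$, i.e., on the full-column-rank hypothesis on $M_L^z$. The remaining ingredients --- the identity $(z^\ell)^\top \bar d = -(y^\ell - x)^\top \nabla f(x)$, the Taylor bound on $r^\ell$, and the $\Delta_z$ rescaling relating $Z^+$ to $(M_L^z)^+$ --- are routine.
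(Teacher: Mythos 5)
Your proposal is correct and follows essentially the same route as the paper: compute the residual of the exact Newton direction in~(\ref{interpolation-inverse}) via the second-order Taylor remainder, obtain the componentwise $\mathcal{O}(\Delta_y^3)$ bound from the Lipschitz continuity of $\nabla^2 f$, and then transfer it to the error $\bar d - d^N$ by rescaling with $\Delta_z$ and applying the left inverse of $M_L^z$. Your treatment is somewhat more explicit than the paper's (notably the identity $(z^\ell)^\top \bar d = -(y^\ell-x)^\top\nabla f(x)$ and the uniform handling of the $p=n$ and $p>n$ cases through $Z^+Z=I_n$), but the underlying argument is the same.
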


\begin{proof}
Expanding $f$ at $y^\ell$ around $x$ in (\ref{interpolation-inverse}) yields
\begin{equation*}
 (-\nabla^2 f(x)^{-1}\nabla f(x)- d^N)^\top z^\ell \; = \;  \mathcal{O}(\Delta_y^3), \quad \ell=1,\ldots,p,
\end{equation*}
where the constant in $\mathcal{O}(\Delta_y^3)$ depends
on the Lipschitz constant of $\nabla^2 f$.
The result follows by dividing both terms by $\Delta_z$ and multiplying by the left inverse of $M_L^z$.
\end{proof}
\vspace{1ex}

One can derive an estimate solely dependent on $\Delta_y$ and
on the conditioning of the matrix $M_L^y$ formed by the rows
$(1/\Delta_y)(y^\ell-x)^\top$, $\ell=1,\ldots,p$.
In fact, from
\[
\Delta_y M_L^y \nabla^2 f(x) \; = \; \Delta_z M_L^z
\]
one has
\[
\left\| \left( M_L^z \right)^{\dagger} \right\|
\; = \; \frac{\Delta_z}{\Delta_y}
\| R_y \|,
\]
with
\begin{equation} \label{Ry}
R_y \; = \; \left( \nabla^2 f(x) (M_L^y)^\top (M_L^y) \nabla^2 f(x) \right)^{-1} \nabla^2 f(x) (M_L^y)^\top.
\end{equation}

\begin{corollary}   \label{error-bound-2}
Suppose that $p \geq n$, the matrix $M_L^z$ is full column rank,
and $\nabla^2 f(x)$ is invertible.
Then,
if $d^N$ satisfies (\ref{interpolation-inverse}), one has
\[
\left\| -\nabla^2 f(x)^{-1}\nabla f(x)- d^N \right\| \; \leq \;
\|R_y\| \mathcal{O}( \Delta_y^2 ),
\]
where
the multiplicative constant in $\mathcal{O}$ depends on the Lipschitz constant of $\nabla^2 f$.
\end{corollary}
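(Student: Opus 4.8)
The plan is to obtain the corollary directly from Theorem~\ref{error-bound} by substituting into it the closed-form expression for the left inverse of $M_L^z$ recorded in the display immediately preceding the statement. To begin, I would recall that Theorem~\ref{error-bound} already gives
\[
\left\| -\nabla^2 f(x)^{-1}\nabla f(x) - d^N \right\| \;\leq\; \Lambda_z \, \mathcal{O}\!\left( \frac{\Delta_y^{3}}{\Delta_z} \right),
\]
where $\Lambda_z$ may be taken to be $\| (M_L^z)^{\dagger} \|$, the norm of the Moore--Penrose left inverse of the full-column-rank matrix $M_L^z$, and where the constant hidden in $\mathcal{O}$ depends only on the Lipschitz constant of $\nabla^2 f$.

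Next I would establish the identity $\| (M_L^z)^{\dagger} \| = (\Delta_z/\Delta_y)\,\|R_y\|$. Since $z^\ell = \nabla^2 f(x)(y^\ell - x)$, the rows of $\Delta_z M_L^z$ are the vectors $(y^\ell - x)^\top \nabla^2 f(x)$, i.e. the rows of $\Delta_y M_L^y \nabla^2 f(x)$, so $M_L^z = (\Delta_y/\Delta_z)\, M_L^y \nabla^2 f(x)$. Inserting this into $(M_L^z)^{\dagger} = \bigl((M_L^z)^\top M_L^z\bigr)^{-1}(M_L^z)^\top$, the scalar ratio $\Delta_y/\Delta_z$ factors out as $(\Delta_z/\Delta_y)$, the symmetry of $\nabla^2 f(x)$ turns $\nabla^2 f(x)^\top$ into $\nabla^2 f(x)$, and what remains is precisely $(\Delta_z/\Delta_y)\,R_y$ with $R_y$ as in~(\ref{Ry}); hence $\Lambda_z = (\Delta_z/\Delta_y)\|R_y\|$. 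Plugging this into the bound above,
\[
\left\| -\nabla^2 f(x)^{-1}\nabla f(x) - d^N \right\| \;\leq\; \frac{\Delta_z}{\Delta_y}\,\|R_y\|\, \mathcal{O}\!\left( \frac{\Delta_y^{3}}{\Delta_z} \right) \;=\; \|R_y\|\, \mathcal{O}(\Delta_y^{2}),
\]
the factors of $\Delta_z$ cancelling, which is the claimed estimate.

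I do not expect a genuine obstacle. The only point that needs care is the pseudoinverse algebra in the second step: one must use the symmetry of $\nabla^2 f(x)$ (so that the transpose collapses and the expression matches~(\ref{Ry})) together with its invertibility and the full column rank of $M_L^y$ (so that the relevant Gram matrices are invertible) — positive definiteness of $\nabla^2 f(x)$ is \emph{not} required. A secondary subtlety worth flagging is that the ``left inverse'' invoked in Theorem~\ref{error-bound} should be read as the Moore--Penrose left inverse for the identity with $R_y$ to hold with equality; any other left inverse (there are many when $p > n$) would only alter the constant absorbed into $\mathcal{O}$, and the corollary would still hold with $\|R_y\|$ replaced by the norm of that left inverse.
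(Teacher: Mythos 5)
Your proposal is correct and follows exactly the route the paper intends: the corollary is an immediate consequence of Theorem~\ref{error-bound} combined with the identity $\|(M_L^z)^{\dagger}\| = (\Delta_z/\Delta_y)\|R_y\|$, which the paper derives from $\Delta_y M_L^y \nabla^2 f(x) = \Delta_z M_L^z$ just as you do, with the $\Delta_z$ factors cancelling to leave $\|R_y\|\,\mathcal{O}(\Delta_y^2)$. Your added remarks on the pseudoinverse algebra (symmetry, not definiteness, of the Hessian; the Moore--Penrose choice of left inverse) are accurate clarifications of points the paper leaves implicit.
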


Hence by controlling the geometry of the points $y^\ell$, $\ell=1,\ldots,p$, around $x$
one can provide an accurate bound when the Hessian of~$f$ is invertible and $p \ge n$. In general, we can attempt to control the conditioning of~$M_L^z$, replacing some of the points $y^\ell$
if necessary. Such a conditioning must eventually become adequate if the vectors $y^\ell-x$
are sufficiently linearly independent and
lie in eigenspaces of $\nabla^2 f(x)$ corresponding to eigenvalues not too close to zero.

Using $p=n$ Hessian-vector products at each iteration is certainly not a desirable strategy
as that would be equivalent to access the entire Hessian matrix. It is however possible to use $p \ll n$ and still obtain an accurate Newton direction model. The possibility we have in mind is to build upon a
previously computed Newton direction model calculated using $p=n$. Let $x_{prev}$ be such
an iterate, $y^1_{prev},\ldots,y^n_{prev}$ be the corresponding sample points, and
$z^1_{prev},\ldots,z^n_{prev}$ be the corresponding Hessian-vector products. Suppose we are now at a new iterate $x$ and we would like to reuse $f(y^1_{prev}),\ldots,f(y^n_{prev})$ and $z^1_{prev}=\nabla^2 f(x_{prev})(y^1_{prev}-x_{prev}),\ldots,z^n_{prev}=\nabla^2 f(x_{prev})(y^n_{prev}-x_{prev})$.
In such a case what we will have in~(\ref{interpolation-inverse}) is
\[
z^{\ell}_{prev} \; = \;
\nabla^2 f(x_{prev})(y^{\ell}_{prev}-x_{prev}) \; \simeq \; \nabla f(y^{\ell}_{prev}) - \nabla f(x_{prev}),
\quad \ell=1,\ldots,p,
\]
but what we wish we would have is
\[
z^{\ell} \; = \;
\nabla^2 f(x)(y^{\ell}_{prev}-x) \; \simeq \; \nabla f(y^{\ell}_{prev}) - \nabla f(x),
\quad \ell=1,\ldots,p,
\]
So, one can obtain an approximation to $z^{\ell}$ from
\begin{equation} \label{with-correction}
z^{\ell}_{prev} + \nabla f(x_{prev}) - \nabla f(x), \quad \ell=1,\ldots,p.
\end{equation}
The error in such an approximation is of the $\mathcal{O}(\max\{\| y^{\ell}_{prev}-x_{prev}\|^2,
\| y^{\ell}_{prev}-x\|^2 \})$, which would then has to be divided by $\Delta_z$ in the context of
Theorem~\ref{error-bound}. Of course, if we then keep applying this strategy the error will accumulate over the iterations, but there are certainly remedies such as bringing a few new, fresh $z$'s at each iteration and applying restarts with $p=n$ whenever the conditioning of $M^z_L$ becomes large.

\subsection{Numerical results for the determined case using a correction} \label{subsec:3num-det}

To use as few Hessian-vector products as possible, we start by using $p=n$ products at iteration zero, to then replace only one interpolation point at each iteration. We choose to replace the point farthest away from the current iterate~$x$. (A perhaps more sound approach would have been to choose the $z^\ell$ that has contributed the most to the conditioning of~$M_L^z$.) A new point is then added, generated in the ball $\mathcal{B}(x, r)$, where $r = \min\{10^{-2}, \max\{10^{-4}, \|x-x_{prev}\| \} \}$.
Therefore, only one more Hessian-vector product and one more function evaluation is required at each iteration. We then replace all other $z^{\ell}_{prev}$'s by~(\ref{with-correction}).
We monitor the condition number of $M^z_L$, and apply a restart (with $p=n$ as in iteration~$0$) whenever $\mbox{cond}(M^z_L) \geq 10^{8}$.

A Newton direction model~$d^N$ is then calculated by
solving~(\ref{interpolation-inverse}) directly.
To guarantee that we have a descent direction~$d$, meaning that $-\nabla f(x)^{\top} d > 0$, we modify the $d^N$ from~(\ref{interpolation-inverse}) so that
 $d = d^N - \beta \nabla f(x)$ where $\beta$ is such that $\cos( d, -\nabla f(x)) = \eta$, and $\eta$ was set to 0.95.

The modified Newton direction model was then used in a line-search algorithm using
the same cubic line search procedure of Subsection~\ref{subsec:num-det}. The comparison is again against the inexact Newton method (as described in~\cite[Section 7.1]{JNocedal_SJWright_2006}).
First we tested the very small problems of Appendix~\ref{APP:test_problems}. Again, we plot performance profiles (see Appendix~\ref{APP:PP}) using as performance metric the numbers of Hessian-vector products and iterations (Figure~\ref{PP:PP_hv_small}) and the number of function evaluations (Figure~\ref{PP:PP_func_small}). The results are quite encouraging.
We then selected a benchmark of 26 unconstrained nonlinear small problems from the CUTEst collection~\cite{NIMGould_DOrban_PhLToint_2015}, listed in Appendix~\ref{APP:larger_problems}. The experiments are reported in Figures~\ref{PP:PP_hv_larger} and~\ref{PP:PP_func_larger} in the form of the same performance profiles. The results are similar and again promising.

%%%%%%%%%%%%%%%
\begin{figure}[H]
	\begin{minipage}[H]{0.5\linewidth}
	%	\centering
		\includegraphics[height=5.5cm,width=8.5cm]{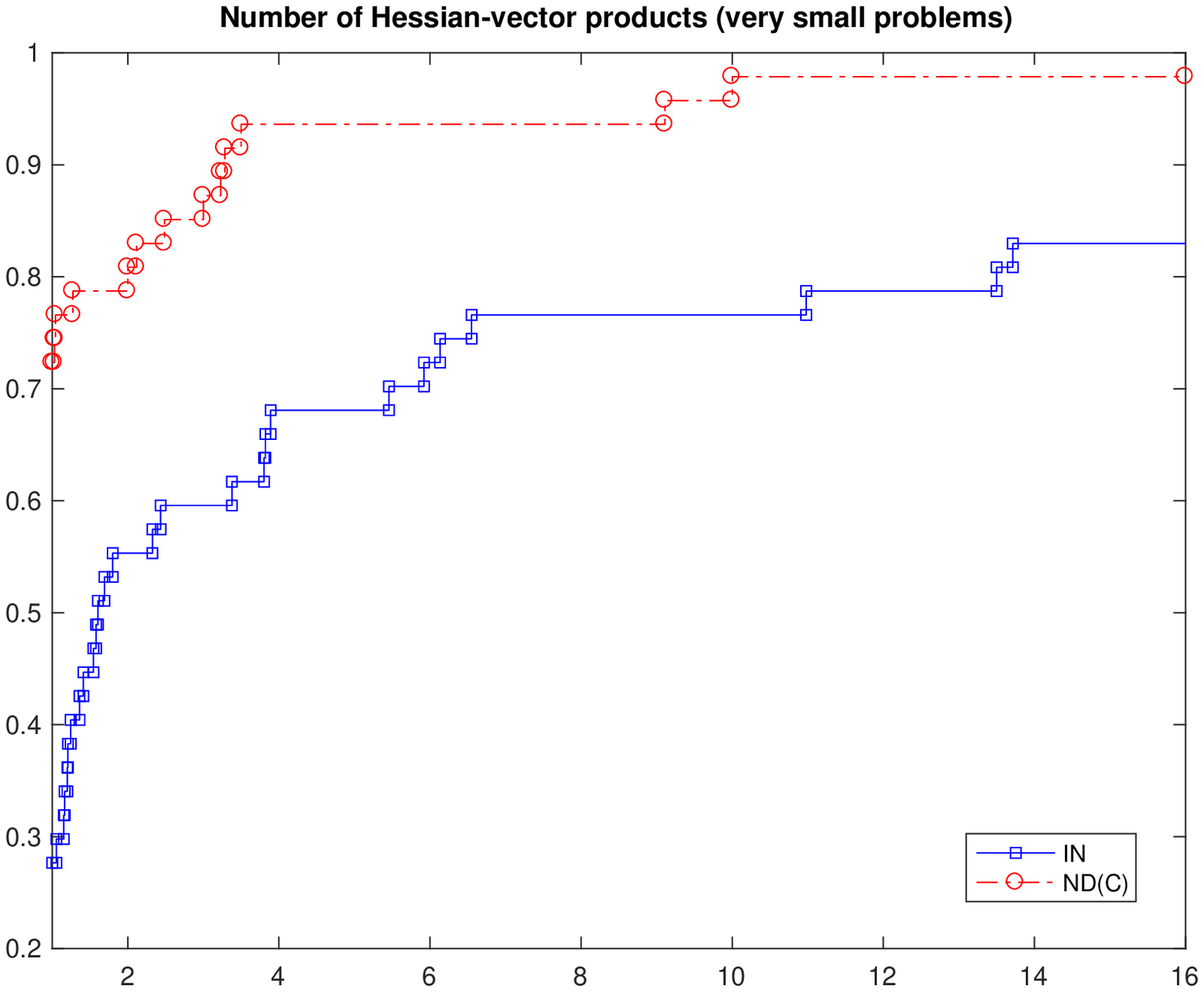}
	\end{minipage}%
	\begin{minipage}[H]{0.55\linewidth}
	%	\centering
		\includegraphics[height=5.5cm,width=8.5cm]{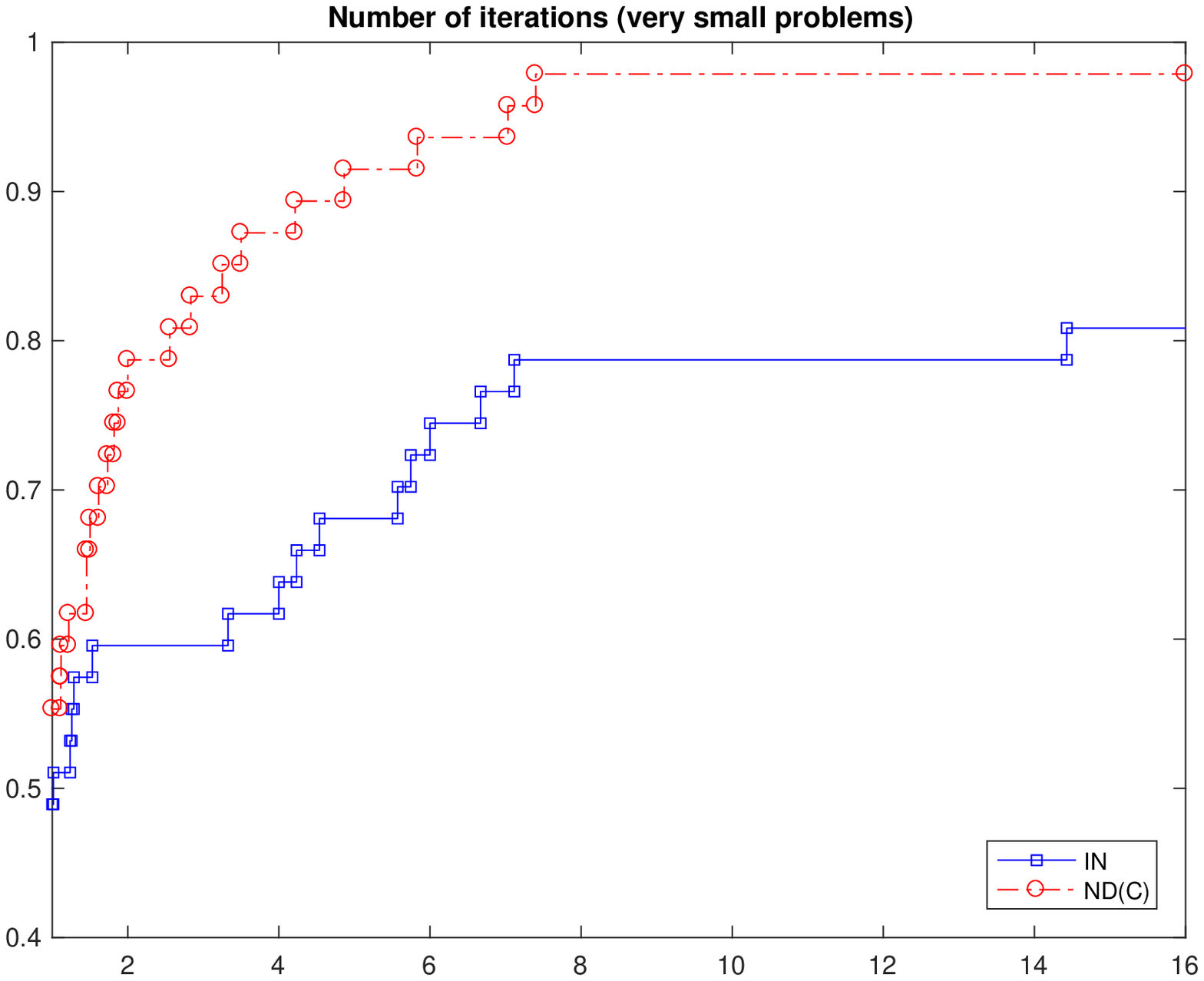}
	\end{minipage}
	\caption{Testing the Newton direction recovery within a line-search algorithm. Performance profiles for the numbers of Hessian-vector products and iterations, for the set of very small problems of Appendix~\ref{APP:test_problems}.}
	\label{PP:PP_hv_small}
\end{figure}
%%%%%%%%%%%%%%%
\begin{figure}[H]
	\begin{minipage}[H]{0.5\linewidth}
		\centering
		\includegraphics[height=5.5cm,width=8.5cm]{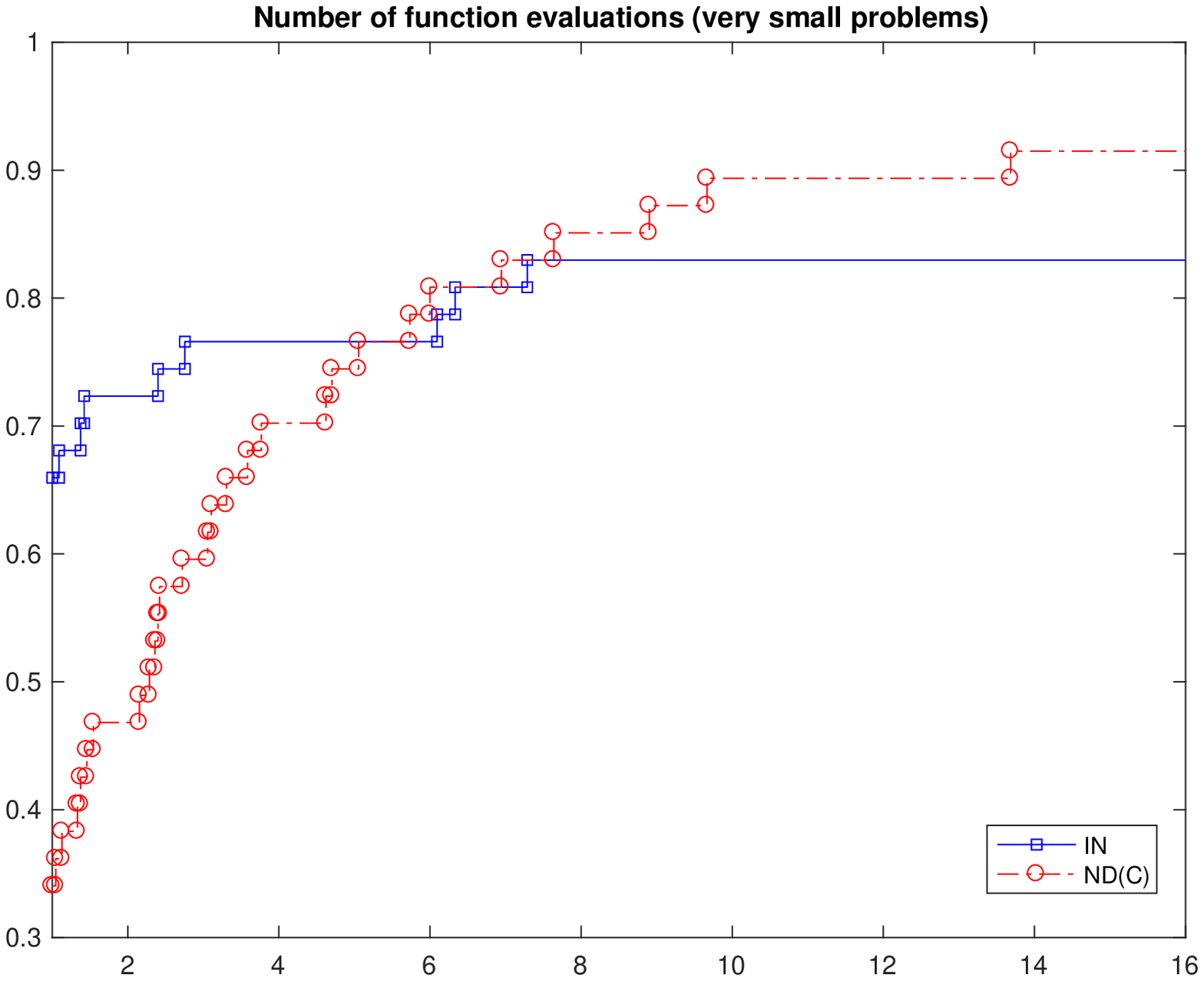}
	\end{minipage}%
%	\begin{minipage}[H]{0.55\linewidth}
%		\centering
%		\includegraphics[height=5.5cm,width=8.5cm]{PP_ND_CPU.eps}
%	\end{minipage}
	\caption{Testing the Newton direction recovery within a line-search algorithm. Performance profiles for the numbers of function evaluations for the set of very small problems of Appendix~\ref{APP:test_problems}.}
	\label{PP:PP_func_small}
\end{figure}
%%%%%%%%%%%%%%%

%%%%%%%%%%%%%%%
\begin{figure}[H]
	\begin{minipage}[H]{0.5\linewidth}
		%	\centering
		\includegraphics[height=5.5cm,width=8.5cm]{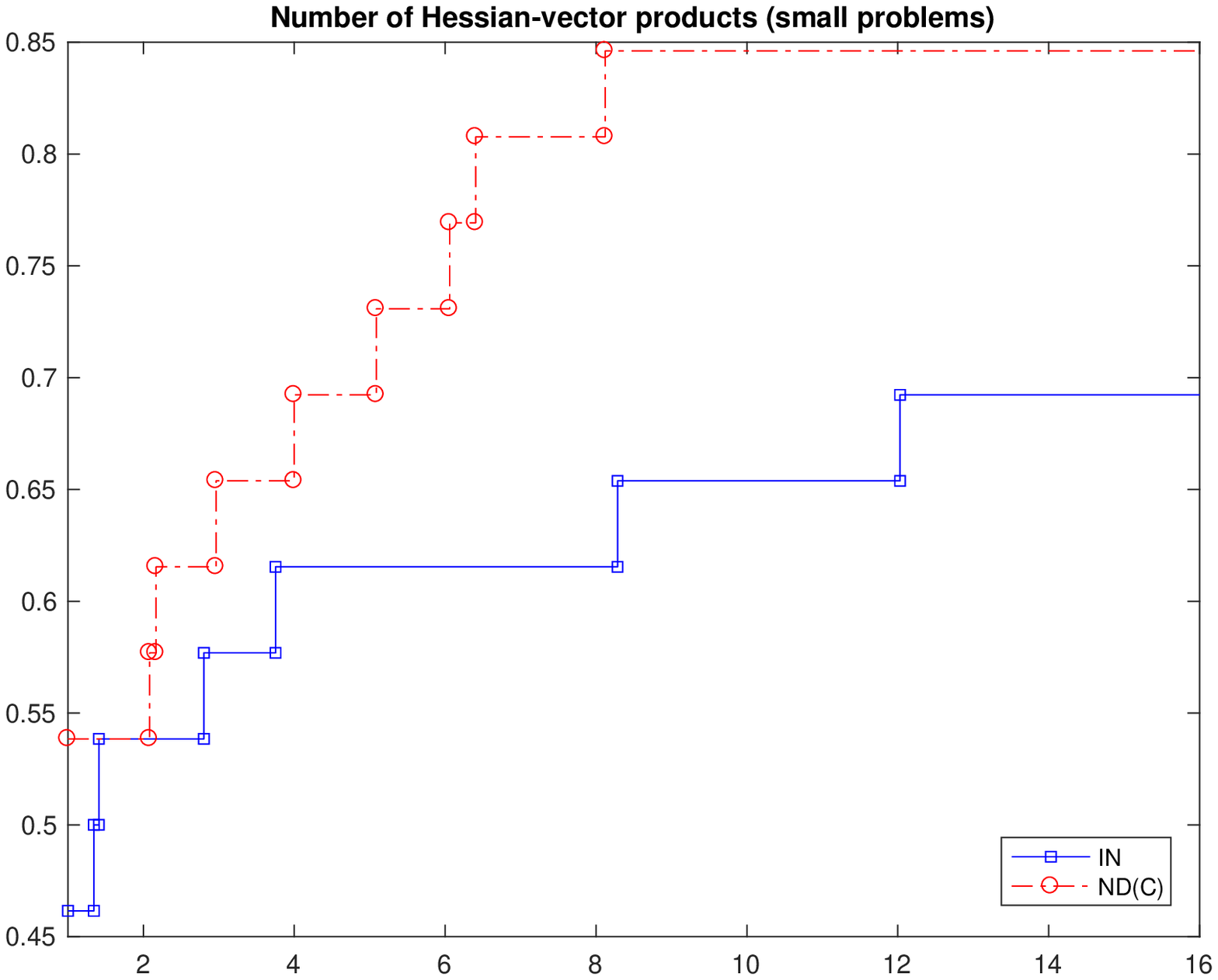}
	\end{minipage}%
	\begin{minipage}[H]{0.55\linewidth}
		%	\centering
		\includegraphics[height=5.5cm,width=8.5cm]{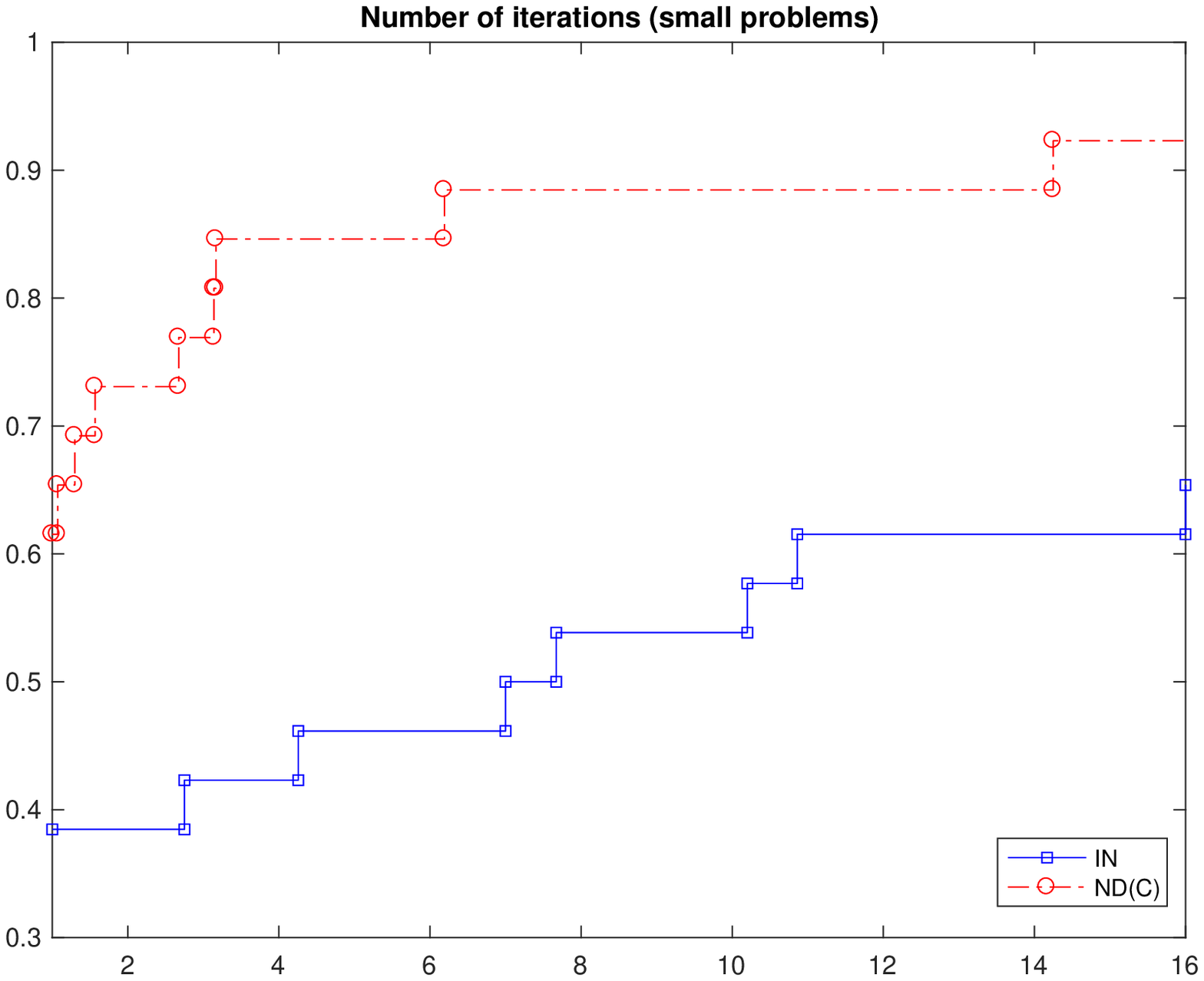}
	\end{minipage}
	\caption{Testing the Newton direction recovery within a line-search algorithm. Performance profiles for the numbers of Hessian-vector products and iterations, for the set of small problems of Appendix~\ref{APP:larger_problems}.}
	\label{PP:PP_hv_larger}
\end{figure}
%%%%%%%%%%%%%%%
\begin{figure}[H]
	\begin{minipage}[H]{0.5\linewidth}
		\centering
		\includegraphics[height=5.5cm,width=8.5cm]{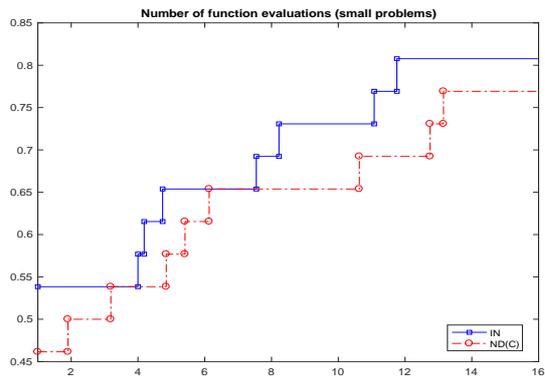}
	\end{minipage}%

	\caption{Testing the Newton direction recovery within a line-search algorithm. Performance profiles for the numbers of function evaluations for the set of small problems of Appendix~\ref{APP:larger_problems}.}
	\label{PP:PP_func_larger}
\end{figure}

\section{Final remarks} \label{sec:finalremarks}

In this paper we showed how to use interpolation techniques from Derivative-Free Optimization to model Hessian-vector products. We aimed at presenting new, refreshing ideas, laying down the theoretical groundwork for future more elaborated algorithmic developments. Two approaches were presented and analyzed. In the first one, one aims at recovering a model the Hessian matrix, possibly sparse if the true Hessian sparsity pattern is known. A drawback of this approach is that at most one Hessian-vector product can be used in the recovery. The second approach aims at directly recovering the Newton direction itself, and it may incorporate several Hessian-vector products at the same time. However, a dense system of linear equations needs to be solved.

It is left for future work the development of competitive versions of these two approaches for medium/large scale problems. In the particular case of the second approach based on the calculation of a Newton direction model, one can consider solving the linear system~(\ref{interpolation-inverse}) using
an iterative solver. In such a case, one can easily envision a parallel procedure for the storage of the matrix $M_L^z$ (storing row-wise the vectors $z^\ell$'s) and the calculation of the products $M_L^z$ times a vector required when applying an iterative solver.

A third recovery approach can also be derived, where the Newton direction and the inverse of the Hessian are recovered at once (possibly never storing the whole inverse, rather forming its product times the gradient). This approach has performed the worse, and we have decided to leave the details for a future PhD thesis of the first author.

%%%%%%%%%%%%%%%%%
\begin{appendices}
	
 \section{Performance profiles}\label{APP:PP}
 Performance profiles~\cite{EDDolan_JJMore_2002} are used to compare the performance of several solvers on a set of problems. Let $\mathcal{S}$ be a set of solvers and $\mathcal{P}$ a set of problems. Let $t_{p,s}$ be the performance metric of the solver $s\in\mathcal{S}$ on the problem $p\in\mathcal{P}$. Then the  performance profile of solver $s \in\mathcal{S}$ is defined as the fraction of problems where the performance ratio is at most $\tau$,
 $$\rho_{s}(\tau)=\frac{1}{|\mathcal{P}|}\left|\left\{p \in \mathcal{P} : \frac{t_{p, s}}{\min \left\{t_{p, s^{\prime}} : s^{\prime} \in S\right\}} \leq \tau\right\}\right|,$$
 where $|\mathcal{P}|$ denotes the cardinality of $\mathcal{P}$.
 The value of $\rho_{s}(1)$ expresses the percentage of problems on which solver~$s$ performed the best.
 The values of $\rho_{s}(\tau)$ for large $\tau$ indicate the percentage of problems successfully solved by
 solver~$s$. Hence, $\rho_{s}(1)$ and $\rho_{s}(\tau)$ for large $\tau$ are, respectively, measures of the efficiency and robustness of a given solver~$s$. Solvers with profiles above others are naturally preferred.

%%%%%%%%%%%%%%%%%
\section{Very small test problems}\label{APP:test_problems}

\begin{table}[H]
	\centering
	\setlength{\tabcolsep}{4mm} % set the length of each row
	\caption{List of 48 very small CUTEst test problems.}
	\label{table:small_problems}
	\small
	\renewcommand{\arraystretch}{1.0}
	\begin{tabular}{|cccccc|}
		\hline
		Name           &Dimension       &Name        &Dimension          &Name        &Dimension     \\
		\hline
		ALLINITU     &4              &ARGLINA       &10          &ARWHEAD        &10      \\
		BEALE           &2 		        &BIGGS6         &6             &BOX3                &3        \\
		BROWNAL   &10             &BRYBND       &10           &CHNROSNB     &10     \\
		COSINE        &10             &CUBE            &2             &DIXMAANA       &15     \\	
		DIXMAANB   &15	         &DIXMAAND   &15	       &DIXMAANE       &15 	   \\
		DIXMAANF    &15	         &DIXMAANG   &15	        &DIXMAANH      &15     \\
		DIXMAANI     &15	      &DIXMAANJ    &15            &DIXMAANK       &15	 \\
		DIXMAANL    &15	         &DIXON3DQ   &10            &DQDRTIC          &10     \\
		EDENSCH10  &10         &ENGVAL2       &3              &EXPFIT              &2     \\
		FMINSURF    &15      	 &GROWTHLS  &3              &HAIRY                &2      \\
		HATFLDD      &3            &HATFLDE      &3               &HEART8LS       &8   \\
		HELIX              &3          &HILBERTA      &10            &HILBERTB          &10 \\
		HIMMELBG    &2          &HUMPS           &2              &KOWOSB            &4      \\
		MANCINO      &30        &MSQRTALS    &4              &MSQRTBLS        &9    \\
		POWER           &10         &SINEVAL         &2               &SNAIL                  &2         \\
		SPARSINE       &10         &SPMSRTLS      &28           &TRIDIA              &10     \\
		\hline
		
	\end{tabular}
\end{table}		

\section{Small sparse test problems}\label{APP:sparse_problems}
\begin{table}[H]
\centering
\setlength{\tabcolsep}{4mm} % set the length of each row
\caption{List of 12 sparse small CUTEst test problems.}
\label{table:sparse_problems}
\small
\renewcommand{\arraystretch}{1.0}
\begin{tabular}{|cccccc|}
\hline
Name           &Dimension       &Name        &Dimension          &Name        &Dimension     \\
\hline
BDQRTIC     &10                   &BROYDN7D         &50          &COSINE        &200      \\
DQRTIC        &10                  &EDENSCH           &200         &ENGVAL1      &200        \\
LIARWHD     &100                 &NONSCOMP       &50          &PENTDI         &100    \\
SROSENBR  &50                   &TOINTGSS          &50          &TRIDIA          &200     \\
\hline
\end{tabular}
\end{table}	
%%%%%%%%%%%%%%%%%

\section{Small test problems}\label{APP:larger_problems}
\begin{table}[H]
	\centering
	\setlength{\tabcolsep}{4mm} % set the length of each row
	\caption{List of 26 small CUTEst test problems.}
	\label{table:larger_problems}
	\small
	\renewcommand{\arraystretch}{1.0}
	\begin{tabular}{|cccccc|}
		\hline
		Name           &Dimension       &Name        &Dimension          &Name        &Dimension     \\
		\hline
		BOX             &200                   &BOXPOWER &200                  &BRYBND      &100\\
		CHNROSNB &50                   &DIXON3DQ    &200                 &DQDRTIC   &100\\
		EDENSCH    &200                 &ENGVAL1        &200                 &EXTROSNB  &100\\
		GENHUMPS    &100             &HILBERTA       &200                 &HILBERTB       &200\\
		INTEQNELS     &100             &LIARWHD        &200                 &MOREBV         &200\\
		PENTDI           &100               &PENALTY1        &100                 &POWELLSG    &36\\
		SPARSINE    &100                 &SROSENBR       &50                 &SROSENBR       &100\\
		TESTQUAD    &100               &TOINTGSS       &50                  &TQUARTIC        &100\\
		TRIDIA             &200              &VAREIGVL         &100\\
		\hline
		
	\end{tabular}
\end{table}	
%%%%%%%%%%%%%%%%%

\end{appendices}

\bibliographystyle{plain}
\bibliography{ref-model-Hvp}

\begin{thebibliography}{10}

\bibitem{ASBandeira_KScheinberg_LNVicente_2012}
A.~S. Bandeira, K.~Scheinberg, and L.~N. Vicente.
\newblock Computation of sparse low degree interpolating polynomials and their
  application to derivative-free optimization.
\newblock {\em Math. Program.}, 134:223--257, 2012.

\bibitem{CCartis_etal_2018}
C.~Cartis, N.~I.~M. Gould, J.~Tanner, and {Ph.}~L. Toint.
\newblock {\em Personal communication}, June, 2018.

\bibitem{ARConn_KScheinberg_LNVicente_2008}
A.~R. Conn, K.~Scheinberg, and L.~N. Vicente.
\newblock Geometry of interpolation sets in derivative free optimization.
\newblock {\em Math. Program.}, 111:141--172, 2008.

\bibitem{ARConn_KScheinberg_LNVicente_2009}
A.~R. Conn, K.~Scheinberg, and L.~N. Vicente.
\newblock {\em Introduction to Derivative-Free Optimization}.
\newblock MPS-SIAM Series on Optimization. SIAM, Philadelphia, 2009.

\bibitem{ARCurtis_MJDPowell_JKReid_1974}
A.~R. Curtis, M.~J.~D. Powell, and J.~K. Reid.
\newblock On the estimation of sparse {J}acobian matrices.
\newblock {\em J. Inst. Math. Appl}, 13:117--119, 1974.

\bibitem{RSDembo_SCEisenstat_TSteihaug_1982}
R.~S. Dembo, S.~C. Eisenstat, and T.~Steihaug.
\newblock Inexact {N}ewton methods.
\newblock {\em SIAM J. Numer. Anal.}, 19:400--408, 1982.

\bibitem{EDDolan_JJMore_2002}
E.~D. Dolan and J.~J. Mor{\'e}.
\newblock Benchmarking optimization software with performance profiles.
\newblock {\em Math. Program.}, 91:201--213, 2002.

\bibitem{SCEisenstat_HFWalker_1994a}
S.~C. Eisenstat and H.~F. Walker.
\newblock Globally convergent inexact {N}ewton methods.
\newblock {\em SIAM J. Optim.}, 4:393--422, 1994.

\bibitem{RFletcher_AGrothey_SLeyffer_1997}
R.~Fletcher, A.~Grothey, and S.~Leyffer.
\newblock Computing sparse {H}essian and {J}acobian approximations with optimal
  hereditary properties.
\newblock In L.~T. Biegler, T.~F. Coleman, A.~R. Conn, and F.~N. Santosa,
  editors, {\em Large-Scale Optimization with Applications}, volume~93 of {\em
  The IMA Volumes in Mathematics and its Applications}, pages 37--52. Springer,
  New York, 1997.

\bibitem{NIMGould_SLucidi_MRoma_PhLToint_1999}
N.~I.~M. Gould, S.~Lucidi, M~Roma, and Ph.~L. Toint.
\newblock Solving the trust-region subproblem using the {L}anczos method.
\newblock {\em SIAM J. Optim.}, 9:504--525, 1999.

\bibitem{NIMGould_DOrban_PhLToint_2015}
N.~I.~M. Gould, D.~Orban, and Ph.~L. Toint.
\newblock Cutest: a constrained and unconstrained testing environment with safe
  threads for mathematical optimization.
\newblock {\em Comput. Optim. Appl.}, 60:545--557, 2015.

\bibitem{SGratton_CWRoyer_LNVicente_2018}
S.~Gratton, C.~W. Royer, and L.~N. Vicente.
\newblock A decoupled first/second-order steps technique for nonconvex
  nonlinear unconstrained optimization with improved complexity bounds.
\newblock {\em Math. Program.}, 2019 (to appear).

\bibitem{LGrippo_FLampariello_SLucidi_1986}
L.~Grippo, F.~Lampariello, and S.~Lucidi.
\newblock A nonmonotone line search technique for {N}ewton's method.
\newblock {\em SIAM J. Numer. Anal.}, 34:707--716, 1986.

\bibitem{SGNash_1984}
S.~G. Nash.
\newblock Newton-type minimization via the {L}anczos method.
\newblock {\em SIAM J. Numer. Anal.}, 21:770--788, 1984.

\bibitem{JNocedal_SJWright_2006}
J.~Nocedal and S.~J. Wright.
\newblock {\em Numerical Optimization}.
\newblock Springer-Verlag, Berlin, second edition, 2006.

\bibitem{MJDPowell_2004}
M.~J.~D. Powell.
\newblock Least {F}robenius norm updating of quadratic models that satisfy
  interpolation conditions.
\newblock {\em Math. Program.}, 100:183--215, 2004.

\bibitem{MJDPowell_PLToint_1979}
M.~J.~D. Powell and {Ph.}~L. Toint.
\newblock On the estimation of sparse {H}essian matrices.
\newblock {\em SIAM J. Numer. Anal.}, 16:1060--1074, 1979.

\bibitem{TSteihaug_1983a}
T.~Steihaug.
\newblock The conjugate gradient method and trust regions in large scale
  optimization.
\newblock {\em SIAM J. Numer. Anal.}, 20:626--637, 1983.

\bibitem{WSun_YYuan_2006}
W.~Sun and Y.~Yuan.
\newblock {\em Optimization Theory and Methods: Nonlinear Programming}.
\newblock Springer, Berlin, 2006.

\end{thebibliography}
\end{document}